\newcommand{\Pa}{\Pi}
\newcommand{\pcset}{\mathscr{P}}
\newcommand{\lsub}[2]{{#1}_{#2}}
\theoremstyle{plain}
\newtheorem{theorem}{Theorem}[section]
\newtheorem{lemma}[theorem]{Lemma}
\newtheorem{proposition}[theorem]{Proposition}
\newtheorem{corollary}[theorem]{Corollary}
\theoremstyle{definition}
\newtheorem{question}[theorem]{Question}
\theoremstyle{remark}
\renewcommand{\fnum@figure}{Fig. \thefigure}
\begin{document}

\title{On Orbits and the Finiteness of\\Bounded Automaton Groups}

\author{Ievgen Bondarenko}

\affil{\small
  Mechanics and Mathematics Faculty,\\
  Taras Shevchenko National University of Kyiv,\\ vul.Volodymyrska 64, 01033, Kyiv, Ukraine\\
  ievgen.bondarenko@gmail.com}

\author{Jan Philipp W\"achter}

\affil{\small
  Institut f\"ur formale Methoden der Informatik,\\
  Universit\"at Stuttgart,\\
  Universit\"atsstra{\ss}e 38, 70569 Stuttgart, Germany\\
  jan-philipp.waechter@fmi.uni-stuttgart.de}

\maketitle

\begin{abstract}
  We devise an algorithm which, given a bounded automaton $A$, decides whether the group generated by $A$ is finite. The solution comes from a description of the infinite sequences having an infinite $A$-orbit using a deterministic finite-state acceptor. This acceptor can also be used to decide whether the bounded automaton acts level-transitively.
  
  \noindent\footnotesize\emph{Keywords: automaton group; bounded automaton; finiteness problem; orbit.}\\
  \emph{Mathematics Subject Classification 2010: 20F10, 20M35}
\end{abstract}

\section{Introduction}\enlargethispage{\baselineskip}

One of many beautiful connections between combinatorics and group theory is the theory of automaton groups. An invertible Mealy automaton $A$ (further, just automaton) over an input-output alphabet $X$ generates a transformation group $G_A$ on the space of words. The recurrence nature of the automaton action produces complicated combinatorics on words. This is reflected in many interesting properties of automaton groups. It was discovered that even simple automata generate Burnside groups, groups of intermediate growth, just-infinite groups etc. (see \cite{GNS,self_sim_groups}).

Algorithmic problems around automaton groups are natural to study. Every automaton group has a decidable word problem. In fact, the word problem is PSPACE-complete \cite{WW:pspace}. Automaton groups with undecidable conjugacy problem were constructed in \cite{SV:conjugacy} and with undecidable order problem in \cite{G:Order,BM:WordOrder}. The isomorphism problem is undecidable in this class of groups (this follows from \cite{SV:conjugacy}).

Algorithmic problems of another type --- decide, given an automaton $A$, whether the group $G_A$ possesses a certain property --- remain widely open. In particular, the finiteness problem (decide whether the group $G_A$ is finite) is open. Interestingly, the finiteness problem for groups generated by bireversible automata is equivalent to the open irreducibility problem for complete square complexes and for lattices in the product of two trees (see \cite{BK:Square}). In \cite{G:Finite}, it is proved that the finiteness problem is undecidable for automaton semigroups.

In this paper we solve the finiteness problem for groups generated by bounded automata. An automaton $A$ is bounded if there is a constant $C$ such that the computation $A(w)$ ends at a non-trivial state for at most $C$ input strings $w$ of length $n$ for every $n$.
Bounded automata were introduced in \cite{sidki:circ} in order to relate the cyclic structure of automata with algebraic properties of the automaton groups. The class of bounded automaton groups is sufficiently large to include most of the fundamental examples of automaton groups, and rather restrictive to be amenable to investigation. In particular, the conjugacy and order problems are decidable in the group of all bounded automata \cite{BSZ:conjugacy}.

\begin{theorem}\label{thm_bounded_finite}
There exists an algorithm which, given a bounded automaton $A$, decides whether the automaton group $G_A$ is finite.
\end{theorem}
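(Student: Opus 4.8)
The plan is to turn the finiteness question for $G_A$ into an emptiness test for a finite automaton that recognises exactly the boundary sequences with an infinite orbit, as announced in the abstract. The first step is the equivalence: $G_A$ is finite if and only if no $\xi\in X^\omega$ has an infinite $G_A$-orbit. One direction is immediate, since an infinite orbit forces $|G_A|=\infty$. For the converse, assume every point stabiliser $\mathrm{Stab}_{G_A}(\xi)$ has finite index. Exploiting the structure of bounded automata --- contraction and the finiteness of the post-critical set $\pcset$ --- one shows that these finite orbits are in fact \emph{uniformly} bounded, say $|G_A\cdot\xi|\le M=M(A)$ for all $\xi$. Then the trivial subgroup $\bigcap_\xi\mathrm{Stab}_{G_A}(\xi)$ (the action on $X^\omega$ is faithful, being faithful on $X^*$) contains the intersection $N_M$ of all subgroups of $G_A$ of index at most $M$, and $N_M$ is a finite-index normal subgroup of the finitely generated group $G_A$; hence $N_M=\{1\}$ and $G_A$ is finite. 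Deriving the uniform bound $M$ from boundedness is the first of the two substantial points.

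Given this reduction, it suffices to decide whether $L_\infty=\{\xi\in X^\omega:|G_A\cdot\xi|=\infty\}$ is empty, and I would do this by building a deterministic finite-state acceptor $\mathcal B$ over $X^\omega$. Its state after reading $x_1\cdots x_n$ is the \emph{orbit type} of the vertex $x_1\cdots x_n$: a finite datum recording (a)~the isomorphism type of the $G_A$-orbit of this prefix together with the induced permutation action on it, kept only up to the relevant finite quotient, and (b)~a bounded-size record of the restrictions $g|_{x_1\cdots x_n}$ of those group elements $g$ that realise this orbit. Boundedness of $A$ is exactly what confines (b) to finitely many possibilities, while contraction (the finite post-critical set $\pcset$) confines (a) to finitely many types. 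Self-similarity of the action then yields a well-defined, hence deterministic, transition: reading a letter $x$ sends the type of $v$ to the type of $vx$, computed inside the finite quotient. Since orbit sizes of prefixes are non-decreasing in $n$, we have $\xi\in L_\infty$ precisely when the run of $\mathcal B$ on $\xi$ takes infinitely often a transition that strictly enlarges the orbit of the current prefix --- a B\"uchi acceptance condition; equivalently, the run traverses infinitely often a ``growing cycle'' of orbit types, coming from a cycle of $A$ through a non-trivial state that enlarges the orbit on each pass. Pinning down which information the orbit type must carry --- the coarsest choice that is still a congruence for letter-reading and still witnesses growth, yet remains finite --- is the second and, I expect, the main obstacle; it is here that the defining property of bounded automata does the real work.

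On input $A$ the algorithm computes the finite transition graph of $\mathcal B$ and tests whether a growing cycle is reachable from its initial state --- plain reachability and cycle detection. If none is reachable then $L_\infty=\emptyset$ and, by the reduction above, $G_A$ is finite; otherwise some $\xi$ has an infinite orbit and $G_A$ is infinite. As a by-product the same acceptor decides level-transitivity of $A$, since the action is level-transitive if and only if every level $X^n$ carries a single $G_A$-orbit, which is once more a decidable condition on the reachable orbit types of $\mathcal B$.
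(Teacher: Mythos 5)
Your high-level plan (a deterministic finite acceptor over $X^\omega$ detecting orbit growth, then reachability of a ``growing'' cycle) is the same shape as the paper's, but the two points you yourself flag as ``substantial'' are exactly the content of the proof, and neither is supplied, so as it stands this is a gap rather than a proof. First, the reduction. The paper does not start from ``$G_A$ is finite iff no boundary point has an infinite orbit''; it uses the elementary fact that a finitely generated automaton group is finite iff the orbits on $X^{*}$ are \emph{uniformly} bounded, and then controls orbit growth level by level. Your version needs ``all boundary orbits finite $\Rightarrow$ uniformly bounded'', equivalently ``$G_A$ infinite $\Rightarrow$ some $\xi\in X^\omega$ has an infinite orbit''. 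This is true for bounded automata, but in the paper it comes out as a \emph{consequence} of the whole construction (Propositions~\ref{prop:recogn_e_orbit}--\ref{prop:finiteIffNoCyle} and \ref{prop:RforGOrbits}), not as an input: unboundedness of orbit sizes over the levels does not yield, by a K\"onig-type argument, a single ray whose prefix orbits are unbounded, since the large orbits can keep branching away from any fixed ray. Appealing to ``contraction and finiteness of $\pcset$'' here is an assertion of precisely the nontrivial step.

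Second, the acceptor itself. The proposed state --- the isomorphism type of the orbit of the current prefix plus ``a bounded-size record of the restrictions $g|_{v}$'' --- is not shown to be finite nor, more importantly, to be a congruence for letter-reading. The obstruction is that for ordinary orbits one does \emph{not} have $O(v)u\subseteq O(vu)$: whether $vx$ and $uy$ (with $u\in O(v)$) fall into one orbit at the next level depends on which members of the orbit are post-critical words and on the labels of left-infinite paths in $A$, information your ``orbit type'' does not carry in any specified finite form. The paper's missing idea is exactly this: refine orbits into $e$-orbits (each step must have trivial section), for which $O^e(v)u\subseteq O^e(vu)$ does hold; observe that the merging of $e$-orbits from level $n$ to $n+1$ is governed by the finite set $E^e$ over the post-critical set, so the finite state can be taken to be the pair $(P,\Pi)$ where $\Pi$ is the partition of $\pcset$ induced by the $e$-orbits and $P$ the part meeting $O^e(v)$; prove that the partitions $\Pi_n$ form a chain of refinements (hence only finitely many occur); and then prove the accepting-cycle criterion and recover ordinary orbits via Lemma~\ref{lem:unionOfEOrbits}. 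Without this construction (or an equivalent one pinning down your ``coarsest congruence''), the machine $\mathcal B$ is not defined, and the algorithm you describe does not yet exist.
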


Our approach is constructive and relies on the following simple observation: Since every automaton group is finitely generated, the group $G_A$ is finite if and only if the orbits of the action $(G_A,X^{*})$ are uniformly bounded. We show not only how to recognize this property for bounded automata, but also give a description of the infinite sequences $w\in X^{\omega}$ having finite/infinite orbits:

\begin{theorem}\label{thm:orbit}
Let $A$ be a bounded automaton. There exists a constructible deterministic finite-state acceptor $R$ with the following property: A sequence $w\in X^{\omega}$ has a finite $G_A$-orbit if and only if the machine $R$, while processing the sequence $w$, passes only through a finite number of accepting states.\footnote[1]{For readers familiar with $\omega$-languages the theorem can be formulated as follows: The set of sequences $w\in X^{\omega}$ with a finite/infinite $G_A$-orbit is deterministic $\omega$-regular, i.e., it is accepted by a deterministic B\"{u}chi automaton $R$ computable from $A$.}
\end{theorem}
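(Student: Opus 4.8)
The plan is to reduce the finiteness of the orbit $\mathrm{Orb}_{G_A}(w)$ to a property of $w$ checkable by a finite-state device scanning $w$ from left to right. The starting observation is that the restriction map sends $\mathrm{Orb}_{G_A}(w)$ onto the level-$n$ orbit $\mathrm{Orb}_{G_A}(x_1\cdots x_n)$ for every $n$, and that a prefix-closed subset of $X^{*}$ having at most $C$ vertices on every level has at most $C$ infinite branches; hence $|\mathrm{Orb}_{G_A}(w)|=\sup_{n}|\mathrm{Orb}_{G_A}(x_1\cdots x_n)|$, so the orbit is finite exactly when the level orbits stay bounded (which also matches the observation that $G_A$ is finite iff its orbits are uniformly bounded). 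Writing $w=x_1w'$ and using self-similarity, $|\mathrm{Orb}_{G_A}(x_1v)|=|\mathrm{Orb}_{G_A}(x_1)|\cdot|\mathrm{Orb}_{H}(v)|$ with $H=\mathrm{Stab}_{G_A}(x_1)|_{x_1}$ again a self-similar group generated by sections of generators; iterating, $|\mathrm{Orb}_{G_A}(x_1\cdots x_n)|=\prod_{i=1}^{n}r_i$ where $r_i=|\mathrm{Orb}_{H_{i-1}}(x_i)|$, $H_0=G_A$ and $H_i=\mathrm{Stab}_{H_{i-1}}(x_i)|_{x_i}$. Therefore $\mathrm{Orb}_{G_A}(w)$ is infinite if and only if $r_i\ge 2$ — i.e.\ the subgroup $H_{i-1}$ moves the letter $x_i$ — for infinitely many $i$.

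It remains to build a finite deterministic machine that detects, while reading $x_1x_2\cdots$, whether $r_i\ge 2$. The crucial point, and the place where boundedness of $A$ enters, is that one need not know the subgroups $H_i$ themselves but only a bounded amount of ``germ'' data: how $H_i$ permutes the first letter, and into which of the finitely many pairwise disconnected non-trivial cycles of $A$ its currently active sections point. The boundedness constant of $A$ caps the number of simultaneously non-trivial sections, so each $H_i$ is generated by products of boundedly many states of $A$; collapsing the finitary parts of such products to the trivial state leaves only finitely many possible germs. One then verifies that reading $x_{i+1}$ updates this germ by a finite-state transition — this is the analysis of how a cycle of $A$ gets ``engaged'' along $w$ — and that whether $H_{i-1}$ moves $x_i$ is determined by the germ of $H_{i-1}$ together with $x_i$. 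Taking for $R$ the deterministic automaton whose state after reading $x_1\cdots x_i$ records the germ of $H_i$ together with the bit ``$r_i\ge 2$'', and declaring a state accepting exactly when that bit is set, the previous paragraph yields that $R$ passes through an accepting state infinitely often if and only if $\mathrm{Orb}_{G_A}(w)$ is infinite; since $R$ is finite and computable from $A$, it is the desired deterministic B\"uchi automaton.

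The main obstacle is the claim that this bounded amount of data suffices, i.e.\ obtaining a sufficiently precise description of the recursive structure of a bounded automaton group acting near a point. A naive concern is that the natural generating sets of the $H_i$ keep growing with $i$; a subtler one is that orbit growth is not controlled by the cyclic words of $A$ alone — already for a single adding machine with an adjoined absorbing letter, a letter can be carried onto the cycle by the group action even though it does not occur in the cyclic word — so one must track orbits of sections on $X$, not just the sections themselves. Thus the technical heart is Sidki's structure theory for bounded automata (the non-trivial part of $A$ is a disjoint union of ``lollipops'': simple cycles with finitary in-trees attached, no two cycles joined by a path), pushed far enough to show that only finitely many germs of section-stabilizer subgroups occur and that cycle-engagement — hence the whole sequence $(r_i)$ — is a finite-state function of $w$.
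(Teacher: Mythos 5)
Your reduction in the first paragraph is sound: the orbit of $w$ is finite iff the sizes of the level orbits $O(w_n)$ stay bounded, and the product formula $|O(x_1\cdots x_n)|=\prod_i r_i$ with $r_i=|\mathrm{Orb}_{H_{i-1}}(x_i)|$, $H_i=\mathrm{Stab}_{H_{i-1}}(x_i)|_{x_i}$, correctly turns finiteness of the orbit into ``$r_i\ge 2$ only finitely often.'' But the theorem's actual content is the construction of a \emph{finite} deterministic machine computing the bits $r_i\ge 2$, and at exactly that point your argument is an assertion rather than a proof. You never define the ``germ'' of $H_i$ precisely, and the claim that ``each $H_i$ is generated by products of boundedly many states of $A$'' is unjustified: the level stabilizers $\mathrm{Stab}_G(w_n)$ are described by Schreier generators of the form $t^{-1}st'$, where the coset representatives $t,t'$ range over a transversal of size $|O(w_n)|$, which is exactly the quantity whose boundedness is in question; their sections at $w_n$ are products of unboundedly many automaton states, and boundedness of $A$ only controls sections of the \emph{generators}, not of these products. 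Your own closing paragraph concedes this (``pushed far enough to show that only finitely many germs \dots occur and that cycle-engagement \dots is a finite-state function of $w$''), which is precisely the missing theorem, not a routine verification.

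The paper closes this gap by a different mechanism that avoids stabilizer subgroups altogether: it tracks the level orbits themselves, observing that $O(vx)$ and $O(v)x$ can differ only through merges occurring at post-critical words, of which a bounded automaton has only finitely many (the finite post-critical set $\pcset$). The finite data carried by the machine is the intersection $O(v)\cap\lsub{\pcset}{|v|}$ together with the partition of $\pcset$ induced by the $e$-orbits of the current level; a refinement lemma shows these partitions stabilize, so only finitely many states arise, and accepting states are exactly the transitions where a merge (hence an increase of orbit size) happens. If you want to salvage your route, you would have to prove a substitute for that machinery — e.g.\ that the possible ``germs'' of the $H_i$ form a finite, effectively computable set with finite-state transitions — which is comparable in difficulty to the paper's construction; as written, your proposal has a genuine gap at its technical heart. (A smaller omission: for orbits of infinite sequences one cannot reduce to $A=C(A)$ via Lemma~\ref{lemma_finite_cyclic_part}, and the paper needs a separate treatment of the case $A\neq C(A)$, which your sketch does not address.)
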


\begin{corollary}
There exists an algorithm which, given a bounded automaton $A$ and an eventually periodic sequence $w\in X^{\omega}$, decides whether the $G_A$-orbit of $w$ is finite.
\end{corollary}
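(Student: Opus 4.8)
The plan is to reduce the statement directly to Theorem~\ref{thm:orbit}. First I would apply that theorem to the given bounded automaton $A$, obtaining the deterministic finite-state acceptor $R$ over $X$, say with state set $Q$; by the theorem (and its footnote) a sequence $w \in X^{\omega}$ has an \emph{infinite} $G_A$-orbit if and only if the run of $R$ on $w$ visits accepting states infinitely often. So it suffices to decide this B\"uchi-type condition under the extra assumption that $w$ is eventually periodic.

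Write the input as $w = u v^{\omega}$ with $u \in X^{*}$ and $v \in X^{+}$ given explicitly. Since $R$ is deterministic and complete, reading the block $v$ induces a map $Q \to Q$; iterating it from the state reached after reading $u$ produces a sequence $q_0, q_1, q_2, \ldots$, where $q_i$ is the state of $R$ after processing $u v^i$. As $Q$ is finite, there are computable $m \ge 0$ and $p \ge 1$ with $q_{m+p} = q_m$, hence $q_{i+p} = q_i$ for all $i \ge m$. Consequently the tail of the run of $R$ on $w$ that reads the copies of $v$ after the prefix $u v^m$ is periodic with period $p$ blocks: the path traced while reading the $(i+1)$-st copy of $v$ coincides with the one traced while reading the $(i+1+p)$-st copy, for every $i \ge m$. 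Hence $R$ visits accepting states infinitely often on $w$ if and only if an accepting state is traversed somewhere along the path that reads one of the $p$ consecutive blocks $v$ at positions $m+1, \dots, m+p$ --- a finite check, carried out by simulating $R$.

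The algorithm is therefore: construct $R$ from $A$ via Theorem~\ref{thm:orbit}; simulate $R$ on $u v^{\omega}$, recording the boundary states $q_i$ until the sequence cycles and recording all states traversed inside each block; then output ``infinite orbit'' exactly when some accepting state occurs in the cycling part, and ``finite orbit'' otherwise. I do not anticipate a genuine obstacle here: the whole content of the corollary is absorbed into the construction of $R$ furnished by Theorem~\ref{thm:orbit}, after which what remains is the routine fact that B\"uchi acceptance of an ultimately periodic word by a finite automaton is decidable. The only points needing care are that the preperiod/period decomposition of $w$ must be available as explicit finite data, and that when checking for acceptance one must inspect all states traversed \emph{within} a block $v$, not merely the boundary states $q_i$.
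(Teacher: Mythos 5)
Your proposal is correct and is exactly the argument the paper intends: the corollary is stated as an immediate consequence of Theorem~\ref{thm:orbit} (the paper gives no separate proof), and what remains is precisely the standard observation that Büchi-type acceptance of an ultimately periodic word by a constructible deterministic finite acceptor is decidable by detecting the cycle of boundary states and inspecting all states traversed within the periodic blocks. Your care about checking states inside a block rather than only at block boundaries is the one detail worth being explicit about, and you handle it properly.
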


Another decision problem --- transitivity on the levels --- is open not only for automaton groups but also for automaton transformations (see \cite{S:spherical} for partial solution of the later case). In the case of a binary alphabet, an automaton group is level-transitive if and only if it is infinite (see \cite[Proposition~2]{BGK:threeState}). Using the machine $R$ from Theorem~\ref{thm:orbit} we solve the level-transitivity problem for bounded automata:

\begin{corollary}
There exists an algorithm which, given a bounded automaton $A$, decides whether the group $G_A$ is level-transitive.
\end{corollary}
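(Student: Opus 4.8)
The plan is to reduce level-transitivity of $G_A$ to finitely many transitivity checks on individual levels $X^{n}$, using the machine $R$ of Theorem~\ref{thm:orbit} only to bound which levels matter. The algorithm will be: construct $R$ from $A$, let $N$ be its number of states, and test whether the generators in $A$ act transitively on $X^{n}$ for each $n=1,\dots,N$ (a finite orbit computation on each level); report that $G_A$ is level-transitive exactly when all $N$ tests succeed. The real content is then the equivalence ``$G_A$ level-transitive $\iff$ $G_A$ transitive on $X^{1},\dots,X^{N}$''.

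The group-theoretic backbone is the following reformulation. For $v\in X^{*}$ let $H_{v}=\mathrm{Stab}_{G_A}(v)|_{v}$ be the section group at $v$ (the group of restrictions $g|_{v}$ with $g\in G_A$ and $g(v)=v$). A straightforward induction on levels shows that $G_A$ acts transitively on $X^{n+1}$ if and only if it acts transitively on $X^{n}$ and $H_{v}$ acts transitively on $X$ for every $v\in X^{n}$: one direction moves two points of $X^{n+1}$ above a common vertex $v$ using transitivity on $X^{n}$ and then adjusts their last letters by an element of $\mathrm{Stab}_{G_A}(v)$, the other reads off the section $g|_{v}$ of a group element $g$ realizing the required move. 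Hence $G_A$ is level-transitive iff $H_{v}$ is transitive on $X$ for \emph{every} $v\in X^{*}$ (equivalently, $G_A$ admits a dense orbit on $X^{\omega}$, and then every orbit is dense).

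I would then use that the state of $R$ after reading a word $v$ determines whether $H_{v}$ is transitive on $X$, relying on the construction underlying Theorem~\ref{thm:orbit}: its states are finite combinatorial descriptions of the section data at the current vertex (for a bounded $A$ the sections $g|_{v}$ have uniformly bounded active part, so only finitely many descriptions occur), and this is exactly the information needed there, since the $G_A$-orbit of $vz$ is finite iff the $H_{v}$-orbit of $z\in X^{\omega}$ is finite. Call a state \emph{good} if its section data makes $H_{v}$ transitive on $X$; as $v$ ranges over $X^{*}$ the state after $v$ ranges over the reachable states of $R$, so by the reformulation $G_A$ is level-transitive iff every reachable state is good. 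A pumping argument on runs of $R$ then gives the bound: if $G_A$ were transitive on $X^{1},\dots,X^{N}$ but not level-transitive, take the least non-transitive level $m$ (so $m>N$); there is a vertex $v\in X^{m-1}$ with $H_{v}$ intransitive on $X$, the run of $R$ on $v$ repeats a state, and deleting the corresponding loop from $v$ yields a shorter vertex with the same $R$-state, hence with $H$ still intransitive on $X$, contradicting minimality of $m$.

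The main obstacle is the claim that the state of $R$ pins down transitivity of $H_{v}$ on $X$. It is genuinely false for the minimal acceptor of the orbit-finiteness $\omega$-language --- a vertex with trivial section group and one with a non-trivial but finite section group both give only finite orbits in $vX^{\omega}$, so are indistinguishable by orbit-finiteness, yet can act differently on $X$ --- so one must work with the combinatorial states delivered by the construction of $R$ (or enlarge them). I would settle this either by checking directly that those states already expose the partition of $X$ into $H_{v}$-orbits, or by augmenting every state of $R$ with that partition and verifying it is updated deterministically when a letter is read; the latter holds because the update $H_{v}\rightsquigarrow H_{vy}=\mathrm{Stab}_{H_{v}}(y)|_{y}$ is again governed by the bounded section data the construction already tracks. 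Once this point is in place, the equivalence of the second paragraph (together with the observation that every reachable state of $R$ is reached by some word of length less than $N$) gives correctness of the algorithm.
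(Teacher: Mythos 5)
Your group-theoretic backbone (level-transitivity iff every stabilizer section $H_v$ is transitive on $X$) and the pumping argument are fine, but the entire algorithm rests on the claim you yourself flag as the main obstacle --- that the state of $R$ reached after $v$ determines whether $H_v$ is transitive on $X$ --- and that claim is never established. As you correctly observe, it cannot follow from the statement of Theorem~\ref{thm:orbit} alone, so it must come from the internals of the construction; but your account of those internals does not match what the paper builds. The states of $R$ are not ``finite combinatorial descriptions of the section data at the current vertex'': they are pairs $(P,\Pi)$ in which $P=O(v)\cap\lsub{\pcset}{|v|}$ records which post-critical prefixes lie in the orbit of $v$ and $\Pi$ is the partition of $\pcset$ induced by the orbits ($e$-orbits, before the relabeling) at the current level; no stabilizers or sections are tracked. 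Consequently your second proposed repair (``augment every state with the partition of $X$ into $H_v$-orbits; the update is governed by the bounded section data the construction already tracks'') is unsupported: the $H_{vy}$-orbit partition of $X$ is not a function of the $H_v$-orbit partition of $X$, and identifying finite data that does make this update deterministic is exactly the content you would still have to supply. Since this is the load-bearing step of the corollary, the proposal as written has a genuine gap. (The claim is in fact salvageable from the actual construction: from $P$, $\Pi$ and the fixed finite sets $E^e$ and $E$ one can compute how the children $vx$, $x\in X$, distribute into orbits of the next level --- this is just the iterative construction of $e$-orbits and orbits --- including the degenerate case where a child's orbit contains no post-critical word, i.e.\ the machine moves to $\bot$, where the orbit stops growing and transitivity fails; but none of this is argued in your text.)

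The paper's own route is much more direct and avoids every one of these steps. Because the label of the state reached after reading $v$ is literally $O(v)\cap\lsub{\pcset}{|v|}$ together with the orbit-induced partition of $\pcset$, level-transitivity is equivalent to a purely syntactic condition on $R$: the fail state $\bot$ is unreachable and every state is labeled $(\pcset,\{\pcset\})$ (respectively $(S,\{S\})$ over the parameterizing set $\pcset'$ when $A\neq C(A)$ --- a case your proposal does not address, and note that one cannot reduce to $C(A)$ for level-transitivity as one does for finiteness). This is checked by inspecting $R$; no stabilizer sections, no pumping bound, and no level-by-level orbit computations on $X^1,\dots,X^N$ are needed.
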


\section{Preliminaries: automata, actions, groups}

In this section we give a brief introduction into automaton groups, see \cite{GNS,self_sim_groups} for more details.

\paragraph{Spaces of words.} Let $X$ be a finite alphabet with at least two letters, fixed for the whole paper. Let $X^{*}$ be the space of all finite words over $X$ (including the empty word). The length of a word $v\in X^{*}$ is denoted by $|v|$. The space $X^{*}$ can be represented as the union of the levels $X^n$ for $n\geq 0$, where $X^n$ denotes the set of words of length $n$. We also consider the spaces of left- and right-infinite words denoted by $X^{-\omega}$ and $X^{\omega}$ respectively. For $w \in X^\omega$, we write $w_n$ for the finite prefix of length $n$ of $w$ and, symmetrically, for $w \in X^{- \omega}$, we write $\lsub{w}{n}$ for the suffix of length $n$.

\paragraph{Automata.} A finite automaton $A$ over $X$ is a tuple $A=(S,X,\pi:S\times X\rightarrow X\times S)$, where $S$ is a finite set, called the set of states of $A$, and $\pi$ is its transition-output function. We identify $A$ with its standard graph representation ---
a directed labeled graph, whose vertices are the states of $A$ and arrows are given by the rule:
\[
\mbox{ $s\xrightarrow{x|y}t$ \ whenever \ $\pi(s,x)=(y,t)$. }
\]
Informally, this arrow indicates that if the automaton is at state $s$ and reads letter $x$, then it outputs letter $y$ and changes its active state to $t$.

A subautomaton of $A$ is an automaton $B = (T, X, \rho: T \times Y \rightarrow X \times T)$ over $X$ with $T \subset S$ and $\rho(t, x) = \pi(t, x)$ for all $t \in T$ and $x \in X$ (i.e., we only consider subautomata over the same alphabet).

\paragraph{Automaton action.} By fixing an initial state $s$ of an automaton $A$, we get a transformation on the space $X^{*}$ in a standard way: Given an input word $v=x_1x_2\ldots x_n\in X^{*}$, the automaton processes the word letter by letter, and generates an output word $y_1y_2\ldots y_n$. In this way we get an action of states on the space $X^{*}$; explicitly, we have $s(x_1x_2\ldots x_n)=y_1y_2\ldots y_n$ whenever there is a directed path in $A$ of the form
\[
s=s_1\xrightarrow{x_1|y_1}s_2\xrightarrow{x_2|y_2}s_3\xrightarrow{x_3|y_3}\ldots\xrightarrow{x_n|y_n}s_{n+1}=t.
\]
The final state $t$ is called the section of $s$ at $v=x_1x_2\ldots x_n$ and is denoted by $s|_v$.
This action naturally extends to the action on the space $X^{\omega}$.

Since we are interested in automaton transformations rather than in automata themselves, all automata in this paper are assumed to be minimal, that is, different states define different transformations. Hence, we can identify the states of automata with the corresponding transformations. The trivial state $e$ is a state corresponding to the identity transformation. Algorithmically, we can minimize any automaton using standard techniques (see \cite{hopcroft1979automata}).

\paragraph{Automaton groups.} An automaton $A$ is called invertible, if every state of $A$ defines an invertible transformation on the space of words. This is the case iff the action of every state on $X$ is by permutation, which can easily be checked algorithmically. The inverse transformations are given by the inverse automaton $A^{-1}$. For an invertible $A$, the group generated by the transformations under composition is called the automaton group $G_A$ associated to $A$ (or generated by $A$). Every automaton group $G_A$ acts faithfully on the spaces $X^{*}$ and $X^{\omega}$ preserving the length of words and the length of common prefixes.

\paragraph{Sections.} Let $g$ be an automaton transformation given by a state $s$ of an automaton $A$. A section of $g$ at $v\in X^{*}$ is the transformation denoted $g|_v$ given by the final state of $A$ on input $v$.
Explicitly, $g|_v$ is the transformation defined by the rule: $g|_v(w_1)=w_2$ iff $g(vw_1)=g(v)w_2$. Sections have the following properties:
\[
g|_{vu}=g|_v|_u \ \mbox{ and } \ (gh)|_v=g|_{h(v)}h|_v.
\]
Every automaton group $G_A$ satisfies the self-similarity property: $g|_v\in G_A$ for every $g\in G_A$ and $v\in X^{*}$.

\begin{lemma}\label{lemma_finite_cyclic_part}
Let $A$ be a finite invertible automaton and $B\subset A$ be a subautomaton. Suppose there exists an integer $n$ such that $s|_v\in B$ for every $s\in A$ and $v\in X^n$. Then $G_A$ is finite if and only if $G_B$ is finite.
\end{lemma}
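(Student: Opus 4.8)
The plan is to prove the two implications separately; the forward one is immediate. Since $B$ is a subautomaton of $A$ over the same alphabet, every state of $B$ is also a state of $A$ and induces the same transformation of $X^{*}$ (the codomain restriction in the definition of subautomaton keeps all sections inside $B$). Hence the generators of $G_B$ form a subset of the generators of $G_A$, so $G_B \leq G_A$, and if $G_A$ is finite then so is $G_B$.

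For the converse, assume $G_B$ is finite. The key step I would isolate is: \emph{$g|_v \in G_B$ for every $g \in G_A$ and every $v \in X^n$.} I would verify this first on generators and their inverses. For a state $s$ of $A$ the hypothesis gives $s|_v \in B \subseteq G_B$ directly; for the inverse, the identity $s^{-1}|_v = \bigl(s|_{s^{-1}(v)}\bigr)^{-1}$ (which follows from the definition of section together with $s\bigl(s^{-1}(v)\bigr) = v$), combined with $|s^{-1}(v)| = n$, shows that $s^{-1}|_v$ is the inverse of a state of $B$ and therefore an element of the group $G_B$. Writing a general $g \in G_A$ as a product $s_1^{\varepsilon_1}\cdots s_k^{\varepsilon_k}$ of generators and inverses and iterating the cocycle identity $(gh)|_v = g|_{h(v)}\,h|_v$ then expresses $g|_v$ as a product of sections of the $s_i^{\varepsilon_i}$ at words of length $n$, each of which lies in $G_B$; hence $g|_v \in G_B$.

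To conclude, I would observe that an element $g \in G_A$ is determined by the permutation it induces on the finite set $X^n$ together with the family of sections $(g|_v)_{v \in X^n}$: for $w = vu$ with $v \in X^n$ one has $g(w) = g(v)\,g|_v(u)$, and the action on words of length $< n$ is forced by prefix preservation. Thus $g \mapsto \bigl(g|_{X^n},\,(g|_v)_{v \in X^n}\bigr)$ is an injective homomorphism of $G_A$ into the wreath product $G_B \wr \Sym(X^n)$, which by the key step is a finite group; so $|G_A| \leq |G_B|^{|X|^{n}}\cdot\bigl(|X|^{n}\bigr)!$ and $G_A$ is finite. I expect the only real subtlety to be the bookkeeping in the key step — keeping track of which length-$n$ words the successive sections are taken at when unwinding the cocycle identity, and noticing that the inverse-state case genuinely relies on $G_B$ being closed under inversion, since the hypothesis speaks only of states of $A$; everything else is routine.
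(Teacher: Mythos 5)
Your proposal is correct and takes essentially the same route as the paper's (much terser) proof: establish the key step that $g|_v \in G_B$ for every $g \in G_A$ and $v \in X^n$, then observe that $g$ is determined by its action on $X^n$ together with the tuple $(g|_v)_{v \in X^n}$. The details you supply --- the identity $s^{-1}|_v = \bigl(s|_{s^{-1}(v)}\bigr)^{-1}$ for inverse states and the explicit wreath-product bound --- are all correct and are precisely what the paper leaves implicit.
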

\begin{proof}
The condition of the lemma implies that $g|_v\in G_B$ for every $g\in G_A$ and $v\in X^n$. Since every $g$ is uniquely given by its action on $X^n$ and the tuple $(g|_v)_{v\in X^n}$, the claim follows.
\end{proof}

Let $C(A)$ be the subautomaton of $A$ spanned by the states reachable from cycles (the circuit part of $A$). Then $G_A$ is finite if and only if $G_{C(A)}$ is finite.

\paragraph{Post-critical set.} If $\ldots, x_2|y_2, x_1|y_1$ is the sequence of labels of a directed left-infinite path in $A$ ending at a non-trivial state, then we call the sequences $\ldots x_2x_1$ and $\ldots y_2y_1$ post-critical. The set of all post-critical sequences is called the post-critical set $\pcset$ of $A$.\footnote{The term post-critical comes from the connection between automaton groups and post-critically finite self-similar sets (see \cite{self_sim_groups}).} The set $\pcset$ is invariant under the shift: if $p=qx\in\pcset$, then $q\in\pcset$. We say that a word $v\in X^{n}$ is post-critical if $v = \lsub{p}{n}$ for some $p\in\pcset$. Note that $v$ is post-critical if $s|_v\neq e$ for some state $s\in C(A^{\pm 1})$ (see Fig.~\ref{fig:postCrit}).
\begin{figure}\centering
  \begin{tikzpicture}[auto, shorten >=1pt, >=latex, initial text=, node distance=1.5cm]
    \node[state] (t) {$t$};
    \node[right=of t] (ldots) {$\dots$};
    \node[state, right=of ldots] (s) {$s$};
    \node[right=of s] (rdots) {$\dots$};
    \node[state, right=of rdots] (sv) {$s|_v$};
    \node[above=0.75cm of t] (tdots) {$\dots$};

    \path[->] (t) edge node {$y_m/y_m'$} (ldots)
                  edge[bend right] node[above right] {$z_l/z_l'$} (tdots.south east)
              (ldots) edge node {$y_1/y_1'$} (s)
              (s) edge node {$x_n/x_n'$} (rdots)
              (rdots) edge node {$x_1/x_1'$} (sv)
              (tdots.south west) edge[bend right] node[above left] {$z_1/z_1'$} (t)
    ;
  \end{tikzpicture}
  \caption{If $s$ is reachable from a cycle and $s|_v \neq e$, then $v = x_n \dots x_1$ is post-critical as a suffix of the post-critical sequence $(z_l \dots z_1)^{-\omega} y_m \dots y_1 \, x_n \dots x_1$.}\label{fig:postCrit}
\end{figure}
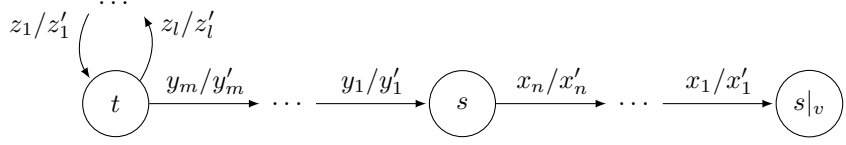
Post-critical sequences indicate directions with non-trivial actions.

\paragraph{Bounded automata.} An invertible automaton $A$ is called bounded if its post-critical set $\pcset$ is finite. This property can be characterized by the cyclic structure of the automaton: an invertible automaton $A$ is bounded if and only if different directed cycles in $A\setminus\{e\}$ are disjoint and not connected by a directed path. In particular, the boundedness of automata is a decidable property. Another characterization can be given in terms of sections: an invertible automaton $A$ is bounded if and only if there exists a constant $C$ such that
\[
  |\{v\in X^{n}\,:\, s|_v\neq e\}| \leq C \ \mbox{ for all $n\in\mathbb{N}$ and $s\in A$.}
\]
The latter characterization is Sidki's original definition for bounded automata and equivalent to the characterization using the cyclic structure \cite[Corollary~14]{sidki:circ}. The set of all transformations defined by all bounded automata forms a group called the group of bounded automata.

Every post-critical sequence of a bounded automaton is eventually periodic, periodic post-critical sequences are read along cycles in $A$.

\section{Action of bounded automata}

Let $A$ be a bounded automaton, $\pcset$ be its finite post-critical set, and $G$ the associated automaton group. In view of Lemma~\ref{lemma_finite_cyclic_part}, we may assume $A=C(A)$. Furthermore, we assume $\pcset \neq \emptyset$ as the group is trivial otherwise.

\paragraph{Orbits and $e$-orbits.} Let $O(v)$ denote the orbit of a word $v\in X^{*}$ under the $G$-action. As mentioned in the introduction, the automaton group $G$ is finite if and only if there is a constant bounding the size of all orbits $O(v)$ with $v\in X^{*}$. We can try to trace how the size of orbits change from level to level. However, there is one difficulty: the words $vx$ and $ux$ could be in different orbits, even when the words $v$ and $u$ are in the same orbit. To overcome this problem we subdivide orbits into parts called $e$-orbits.

We say that two words $v,u\in X^n$ are $e$-equivalent if there exists $s\in A^{\pm 1}$ such that $s(v)=u$ and $s|_v=e$. This generates an equivalence relation on $X^n$, the equivalence classes are called $e$-orbits of the $n$-th level. More explicitly, two words $v,u\in X^n$ belong to the same $e$-orbit whenever there exists a finite sequence of states $s_1,\ldots,s_m$ in $A^{\pm 1}$ such that
\[
v_1=v, v_2=s_1(v_1), v_3=s_2(v_2),\ldots, v_{m+1}=s_m(v_m)=u \ \mbox{ and } \ s_i|_{v_i}=e \ \mbox{ for all $i$} \text{.}
\]
Note that $g(v)=u$ and $g|_v=e$ for $g=s_m\ldots s_2s_1\in G$.\footnote{Therefore our equivalence refines the slightly different one where $v$ and $u$ are equivalent if there is some $g \in G$ with $g(v) = u$ and $g|_v = e$. The difference is that in our case all the individual $s_i|_{v_i}$ must be $e$. This is necessary for the construction outlined below.}

The $e$-orbit of a word $v\in X^{*}$ is denoted by $O^e(v)$. By construction, $O^e(v)\subset O(v)$ and $O^e(v)u\subset O^e(vu)$ for every $v,u\in X^{*}$. If $O^e(v)$ does not contain a post-critical word, then $O^e(vu)=O^e(v)u$ for every $u\in X^{*}$. In particular, $|O^e(v)|=|O^e(wx)|$, where $wx$ with $x \in X$ is the largest prefix of $v$ such that $O^e(w)$ contains a post-critical word.

In general, $e$-orbits can be very different from standard orbits\footnote{It is worth mentioning that $e$-orbits appear naturally in connection with limit spaces of automaton groups. The orbits $O(v)$ are the connected components of the action graph $\Gamma(A,X^n)$, where $X^n$ is the vertex set and $v,u\in X^n$ are connected by an edge if $s(v)=u$ for some $s\in A^{\pm 1}$. The $e$-orbits are the connected components of tile graphs, which describe the intersection of tiles in the limit space. The iterative construction of $e$-orbits is just the inflation construction of tile graphs described in \cite{BDN:Ends}.}. However, since the automaton $A$ is bounded and $A=C(A)$, every orbit is a union of a bounded number of $e$-orbits:
\begin{lemma}\label{lem:unionOfEOrbits}
  $O(v)$ is the union of $O^e(v)$ and $O^e(u)$ for all post-critical words $u\in O(v)$.
\end{lemma}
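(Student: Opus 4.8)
The plan is to prove the two inclusions separately; the inclusion "$\supseteq$" is immediate and the content is in "$\subseteq$". For "$\supseteq$" it suffices to recall that $O^e(v) \subseteq O(v)$ by the very definition of $e$-orbits, and that for any post-critical $u \in O(v)$ we have $O^e(u) \subseteq O(u) = O(v)$. Hence the right-hand side is contained in $O(v)$.

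For "$\subseteq$", I would take an arbitrary $w \in O(v)$ and write $w = g(v)$ with $g \in G$ expressed as a product $g = s_m \cdots s_1$ of generators $s_i \in A^{\pm 1}$. This yields the chain $v = v_1$, $v_2 = s_1(v_1)$, \ldots, $v_{m+1} = s_m(v_m) = w$ appearing in the definition of $e$-orbits. If every section $s_i|_{v_i}$ is trivial, then $w \in O^e(v)$ by definition and we are done. Otherwise, let $i$ be the largest index with $s_i|_{v_i} \neq e$. Then $s_{i+1}|_{v_{i+1}} = \cdots = s_m|_{v_m} = e$, so the words $v_{i+1}, v_{i+2}, \ldots, v_{m+1} = w$ are pairwise $e$-equivalent and thus $w \in O^e(v_{i+1})$. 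Since $v_{i+1} = (s_i \cdots s_1)(v) \in O(v)$, it remains only to check that $v_{i+1}$ is a post-critical word.

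To establish this I would use the composition rule for sections: from $s_i(v_i) = v_{i+1}$ one gets $s_i^{-1}|_{v_{i+1}} = (s_i|_{v_i})^{-1}$, which is non-trivial because $s_i|_{v_i} \neq e$. Now $s_i^{-1}$ is a state of $A^{\pm 1}$, and since we have assumed $A = C(A)$, every state of $A^{\pm 1}$ is reachable from a cycle, i.e.\ $A^{\pm 1} = C(A^{\pm 1})$; therefore the observation illustrated in Fig.~\ref{fig:postCrit} (reading $v_{i+1}$ along a path from $s_i^{-1}$ to $s_i^{-1}|_{v_{i+1}} \neq e$ and prolonging it to the left through a cycle) shows that $v_{i+1}$ is post-critical. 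This finishes the inclusion.

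The proof is short, and the only step that needs care is recognizing that it is the \emph{image} $v_{i+1}$, rather than the preimage $v_i$, which is post-critical: this is what forces one to pass to the inverse generator $s_i^{-1}$ and apply the section identity $s_i^{-1}|_{v_{i+1}} = (s_i|_{v_i})^{-1}$ together with $A^{\pm 1} = C(A^{\pm 1})$. (Equivalently, one may run the backward chain $w = w_1$, $w_2 = s_m^{-1}(w_1), \ldots$ obtained from $g^{-1} = s_1^{-1} \cdots s_m^{-1}$ and take its first non-trivial section; this makes the appearance of the inverse generator automatic.) The remaining manipulations are routine bookkeeping with the definitions of $O^e$ and of post-critical words.
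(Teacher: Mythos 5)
Your proof is correct and follows essentially the same route as the paper's: take the last index $i$ with a non-trivial section, observe that the image $v_{i+1}$ is post-critical and that the remaining generators act with trivial sections, so $w$ lies in $O^e(v_{i+1})$. The only difference is that you spell out, via $s_i^{-1}|_{v_{i+1}} = (s_i|_{v_i})^{-1}$ and $A^{\pm 1} = C(A^{\pm 1})$, why $v_{i+1}$ is post-critical, a point the paper leaves implicit.
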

\begin{proof}
  Clearly, all the mentioned $e$-orbits are contained in $O(v)$. For the other direction, let $w \in O(v)$. Thus, there are $s_1, \dots, s_m \in A^{\pm 1}$ with $s_m \dots s_1 (v) = w$. If we have $w \not\in O^e(v)$, there must be some $i$ with $s_i|_{v'} \neq e$ for $v' = s_{i - 1} \dots s_1 (v)$ and we may assume $i$ to be maximal with this property. We have that $w' = s_i (v') = s_i \dots s_1 (v)$ is post-critical and in $O(v)$. Additionally, we have $w \in O^e(w')$ because $w = s_m \dots s_{i + 1} (w')$.
\end{proof}\noindent
In particular, the group $G$ is finite if and only if the sizes of its $e$-orbits are bounded.

\paragraph{Iterative construction of $e$-orbits.} The $e$-orbits can be constructed iteratively as follows.
Let $O_1,\ldots,O_l$ be the $e$-orbits of the $n$-th level. Then every $e$-orbit of the $(n+1)$-st level is a union of certain sets $O_ix=\{vx : v\in O_i\}$ ($x\in X$), and we can tell exactly which of these sets should be combined. Two different sets $O_ix$ and $O_kz$ belong to the same $e$-orbit if and only if there exists a set $O_jy\neq O_ix$ such that $O_jy$ and $O_kz$ belong to the same $e$-orbit and there exist $s\in A^{\pm 1}$ and $v\in O_i$, $u\in O_j$ such that $s(vx)=uy$ and $s|_{vx}=e$. Then $s|_v\neq e$, since otherwise $x=y$ and $v,u$ belong to the same $e$-orbit $O_i=O_j$. Therefore, the words $v$ and $u$ are post-critical. Hence, we successively combine two sets containing $O_ix$ and $O_jy$ if there exist $p,q\in\pcset$ such that $\lsub{p}{n} \in O_i$, $\lsub{q}{n} \in O_j$ and the pair $\{(p,x),(q,y)\}$ belongs to the set
\[
E^e=\left\{ \{(p,x),(q,y)\} :
\begin{array}{l}
\mbox{ there is a directed left-infinite path in $A$} \\
\mbox{ ending at the trivial state and labeled by $px|qy$}
\end{array}
 \right\}.
\]
The resulting sets are exactly the $e$-orbits of the $(n+1)$-st level.

In particular, in order to trace the growth of $e$-orbits, we just need to know how the post-critical words are distributed among $e$-orbits. Since the post-critical words are parameterized by the post-critical set $\pcset$, we can talk about partitions of $\pcset$. More precisely, we say that two elements $p,q\in\pcset$ belong to the same $e$-orbit of the $n$-th level if $\lsub{p}{n}$ and $\lsub{q}{n}$ are in the same $e$-orbit.

\paragraph{The partitions of $\pcset$ induced by the $e$-orbits.}
Let $\Pa_n$ be the partition of the post-critical set $\pcset$ induced by the $e$-orbits of the $n$-th level. The sequence $\Pa_n$ can be constructed recursively as follows. We start with the partition $\Pa_0=\{\pcset\}$ of the $0$-th level. In order to go from level $n$ to level $n + 1$, we describe a construction to define a partition $\Pi'$ based on a partition $\Pi$ in such a way that, if $\Pi = \Pi_n$ is the partition at level $n$, then $\Pi'$ is the partition $\Pi_{n + 1} = \Pi'$ at level $n + 1$.

Consider the collection of sets $Px$ for $P\in\Pa$ and $x\in X$, note that these sets form a partition of $\pcset X\supset\pcset$. Combine two sets $P x$ and $Q y$ whenever there exists $p\in P$ and $q\in Q$ such that $\{(p,x),(q,y)\}\in E^e$. Iterating this process results in a partition $\Lambda=\{\Lambda_1,\ldots,\Lambda_k\}$ of $\pcset X$. This $\Lambda$ induces the partition $\Pa'$ of the set $\pcset$, which consists of those sets $\Lambda_i\cap\pcset$ that are nonempty:
\[
  \Pi' = \{ \Lambda_i \cap \pcset \mid \Lambda_i \cap \pcset \neq \emptyset \}
\]
By the iterative construction of the $e$-orbits, the set $\Pa'$ is exactly the partition of $\pcset$ induced by the $e$-orbits of the $(n+1)$-st level if $\Pa$ is the partition for level $n$.

\begin{lemma}
The partitions $\Pa_n$ form a sequence of refinements. In particular, the sequence $\Pa_n$ eventually stabilizes and involves at most $|\pcset|$ partitions.
\end{lemma}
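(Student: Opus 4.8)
The plan is to establish the two assertions in turn: that $\Pa_{n+1}$ refines $\Pa_n$ for every $n$, and then that a chain of refinements of the finite set $\pcset$ must become constant after at most $|\pcset|$ steps. For the refinement I would not argue through the recursion $\Pa\mapsto\Pa'$: that recursion need not turn an arbitrary partition into a refinement of itself, so the refinement property has to be read off from the $e$-orbit semantics rather than from the recursive formula.

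Concretely, consider the map $\phi\colon X^{n+1}\to X^n$ deleting the first letter of a word, and observe $\phi(\lsub{p}{n+1})=\lsub{p}{n}$ for every $p\in\pcset$. The key step is to show that $\phi$ carries each $e$-orbit of the $(n+1)$-st level into a single $e$-orbit of the $n$-th level; granting this and applying it to $\lsub{p}{n+1}$ and $\lsub{q}{n+1}$, we get that $p$ and $q$ lying in a common block of $\Pa_{n+1}$ forces them into a common block of $\Pa_n$, which is precisely the statement that $\Pa_{n+1}$ refines $\Pa_n$. To prove the key step it suffices to treat a single step of the generating relation of $e$-equivalence: if $w'=s(w)$ with $s\in A^{\pm1}$ and $s|_w=e$, write $w=a\,\phi(w)$ with $a\in X$; then $w'=s(a)\,s|_a(\phi(w))$, so $\phi(w')=s|_a(\phi(w))$, while $s|_a|_{\phi(w)}=s|_{a\,\phi(w)}=s|_w=e$ and $s|_a\in A^{\pm1}$ (the target of a transition of $A$, resp.\ of $A^{-1}$, is again a state of $A$, resp.\ of $A^{-1}$). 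Hence $\phi(w)$ and $\phi(w')$ are $e$-equivalent on the $n$-th level, and chaining these single steps along an arbitrary witnessing sequence yields the key step.

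For the stabilization, once refinement is known only elementary facts are needed. The number of blocks of $\Pa_n$ is non-decreasing in $n$, equals $1$ at $n=0$, and never exceeds $|\pcset|$, hence is eventually constant, say from index $N$ on; for $n\geq N$ the partition $\Pa_n$ refines $\Pa_N$ and has the same number of blocks, so $\Pa_n=\Pa_N$ and the sequence stabilizes. Since distinct partitions in a refinement chain have strictly increasing block counts, all lying in $\{1,\dots,|\pcset|\}$, at most $|\pcset|$ distinct partitions occur among the $\Pa_n$. The only point requiring care is the single-step argument for $\phi$ — passing to the section $s|_a$ and checking it still lies in $A^{\pm1}$; everything else is routine, and in particular no use is made of boundedness of $A$ or of the assumption $A=C(A)$.
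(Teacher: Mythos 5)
Your proof is correct, but it takes a genuinely different route from the paper's. The paper proves the refinement property through the recursion $\Pi \mapsto \Pi'$: it shows the operator is monotone with respect to refinement (if $\Sigma_2$ refines $\Sigma_1$, then $\Sigma_2'$ refines $\Sigma_1'$) and combines this with the trivial base case that $\Pi_1$ refines $\Pi_0 = \{\pcset\}$, so induction yields the chain of refinements --- exactly because, as you rightly observe, $\Pi'$ need not refine an arbitrary $\Pi$. You instead argue directly from the semantic definition of $\Pi_n$ via $e$-orbits: deleting the first letter sends $\lsub{p}{n+1}$ to $\lsub{p}{n}$, and each generating step $w' = s(w)$ with $s|_w = e$, $s \in A^{\pm 1}$ at level $n+1$ projects (writing $w = a\,u$ with $a \in X$) to the generating step $u \mapsto (s|_a)(u)$ with $(s|_a)|_u = s|_w = e$ and $s|_a \in A^{\pm 1}$ at level $n$; chaining gives that $e$-equivalence at level $n+1$ projects to $e$-equivalence at level $n$, whence the refinement. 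This is a clean alternative: it bypasses the recursion entirely (and with it the paper's only sketched claim that the recursion computes $\Pi_{n+1}$ from $\Pi_n$), whereas the paper's monotonicity argument is purely combinatorial and stays closer to the construction actually used to build the machine $R^e$. Your stabilization and counting argument is the standard one the paper leaves implicit and is fine. One small caveat: the conclusion that at most $|\pcset|$ partitions occur does use finiteness of $\pcset$, i.e.\ boundedness of $A$, so your closing remark that boundedness is not used applies only to the refinement part.
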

\begin{proof}
The first step of the construction gives a refinement $\Pa_1$ of the trivial partition.
In order to show that $\Pa_{n + 1}$ is a refinement of $\Pa_n$, we show something more general: If $\Sigma_1$ and $\Sigma_2$ are two partitions of $\pcset$ and $\Sigma_2$ is a refinement of $\Sigma_1$, then $\Sigma_2'$ is also a refinement of $\Sigma_1'$. Two sets containing $P_2x$ and $Q_2y$ with $P_2, Q_2 \in \Sigma_2$ only get merged in the construction of $\Sigma_2'$ if there are $p' \in P_2$ and $q' \in Q_2$ with $\{ (p', x), (q', y) \} \in E^e$. Since we have $p, p' \in P_2$, we also have $p, p' \in P_1$ for some $P_1 \in \Sigma_1$ and, similarly, $q, q' \in Q_1$ for some $Q_1 \in \Sigma_1$. Thus, we also merge the sets containing $P_1x$ and $Q_1y$ in the construction of $\Sigma_1'$ from $\Sigma_1$.
\end{proof}

\paragraph{Bounded $e$-orbits.}
Let us construct a finite-state machine $R^e$ recognizing unbounded $e$-orbits. The machine $R^{e}$,
given an input word $v\in X^{*}$, will return the set $O^e(v) \cap \lsub{\pcset}{|v|}$, where $\lsub{\pcset}{n} = \{ \lsub{p}{n} \mid p\in\pcset \}$.\footnote{The construction is similar to the construction of the recognition automaton from \cite{BDN:Ends}, which, given an input word $v\in X^n$, returns the number of connected components in the punctured graph $\Gamma(A,X^{n})$ with vertex $v$ removed.}

First, we successively construct the partitions $\Pa_n$ until $\Pa'=\Pa$:
\[
\Pa_0=\{\pcset\}, \ \Pa_{1}=\Pa'_0, \ \Pa_{2}=\Pa'_1, \ \ldots, \ \Pa_{n_0+1}=\Pa'_{n_0}=\Pa_{n_0}.
\]
Now the states of $R^e$ are the pairs $(P,\Pa)$ for $P\in\Pa$, where $\Pa$ ranges among the constructed partitions. Also, we add a new state $\bot=(\emptyset,\{\pcset\})$, which will correspond to the case $O^e(v) \cap \lsub{\pcset}{|v|}=\emptyset$. The initial state is $(\pcset,\Pa_0)$.

We construct transition arrows as follows: for every $P\in\Pa$ and $x\in X$ we put an arrow $(P,\Pa)\xrightarrow{x} (P',\Pa')$ for $P'\in\Pa'$, if $Px\subset \Lambda_i$ and $P'=\Lambda_i\cap\pcset$ for some $i$ (see the definition of $\Lambda_i$ above). For every state $(P, \Pa)$ of $R^e$ and every $x\in X$, if the transition $(P, \Pa) \xrightarrow{x}$ is not yet defined (i.e., if $\Lambda_i \cap \pcset = \emptyset$), we put $(P, \Pa) \xrightarrow{x} \bot$.

We mark a state $(P', \Pa')$ as accepting if there are states $(P, \Pa)$ and $(Q, \Pa)$ as well as $x, y \in X$ with $P \neq Q$ or $x \neq y$ and arrows $(P, \Pa) \xrightarrow{x} (P', \Pa')$ and $(Q, \Pa) \xrightarrow{y} (P', \Pa')$. This means that, when passing from $\Pa$ to $\Pa'$, the construction of $P'$ involved taking a union of at least two parts $Px$ and $Qy$ from the previous level.

The following properties follow from construction. The arrow $(P,\Pi)\xrightarrow{x} (P',\Pi')$ indicates the following: if $\Pi_n=\Pi$, then $\Pi_{n+1}=\Pi'$ and $Ox\subset O'$, where $O$ and $O'$ are the $e$-orbits containing the sets $P_n$ and $P'_{n+1}$ respectively. In particular, for every input word $v\in X^{*}$, if $(P,\Pi)$ is the final state of $R^e$ after processing $v$, then $P=O^e(v)\cap \lsub{\pcset}{|v|}$ (equivalently, $O^e(v)=O^e(P)$). Moreover, the state $(P',\Pi')$ is accepting exactly when $|Ox|=|O|<|O'|$
(i.e., the size of the $e$-orbit increases). The next statement follows.

\begin{proposition}\label{prop:recogn_e_orbit}
For every $w\in X^{\omega}$, the size of the $e$-orbits $O^e(w_n)$ is bounded if and only if the machine $R^e$, while processing the sequence $w$, passes only through a finite number of accepting states.
\end{proposition}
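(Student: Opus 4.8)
The plan is to read the statement off the structural properties of $R^e$ that were just established, leaving only an elementary remark about monotone sequences of integers. Fix $w\in X^{\omega}$ and write $(P_0,\Pa_0),(P_1,\Pa_1),(P_2,\Pa_2),\dots$ for the sequence of states through which $R^e$ passes while reading $w$, so that $(P_0,\Pa_0)$ is the initial state and $P_n$ is the state component reached after the prefix $w_n$. By construction, $\Pa_n$ is the partition of $\pcset$ induced by the level-$n$ $e$-orbits and $P_n=O^e(w_n)\cap\lsub{\pcset}{n}$, so $O^e(w_n)=O^e(P_n)$. First I would record that the sequence $a_n:=|O^e(w_n)|$ is non-decreasing: the arrow $(P_n,\Pa_n)\xrightarrow{x}(P_{n+1},\Pa_{n+1})$ taken on the $(n+1)$-st letter $x$ of $w$ satisfies $O^e(w_n)\,x\subseteq O^e(w_{n+1})$ and $|O^e(w_n)\,x|=|O^e(w_n)|$.

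Next I would make precise the dictionary between accepting positions of the run and strict increases of $(a_n)$. Using the description of the accepting states together with the iterative construction of $e$-orbits, the state reached at step $n+1$ is accepting if and only if, in passing from level $n$ to level $n+1$, at least one level-$n$ $e$-orbit other than $O^e(w_n)$ is absorbed into $O^e(w_{n+1})$; since distinct level-$n$ $e$-orbits are disjoint and nonempty, this happens exactly when $a_{n+1}\geq a_n+1$, and otherwise $O^e(w_{n+1})=O^e(w_n)\,x$ and $a_{n+1}=a_n$. The one delicate point is the sink state $\bot=(\emptyset,\{\pcset\})$, which is not marked accepting: the run can reach $\bot$ only at a step $N$ with $O^e(w_N)\cap\lsub{\pcset}{N}=\emptyset$, after which (as already noted for $e$-orbits containing no post-critical word) $O^e(w_m)=O^e(w_N)\,u$ with $w_N u=w_m$ for every $m\geq N$, so $a_m=a_N$ is constant and the run stays in $\bot$. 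Hence the number of accepting states through which $R^e$ passes and the number of indices $n$ with $a_n<a_{n+1}$ are simultaneously finite or simultaneously infinite (they may differ only by the bounded contribution of the initial state and of the single step that enters $\bot$).

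The proof then finishes in one step: a non-decreasing sequence $(a_n)$ of positive integers is bounded if and only if it is eventually constant, that is, if and only if it has only finitely many strict increases, that is, if and only if $R^e$ passes through only finitely many accepting states. So $\sup_n|O^e(w_n)|<\infty$ precisely in that case, which is the assertion. The hard part is not this final observation but the dictionary of the second paragraph --- one has to check that every merge of blocks of $\Pa_n$ that $R^e$ detects is an actual merge of level-$n$ $e$-orbits and conversely, so that no increase of $|O^e(w_n)|$ is overlooked and none is reported spuriously, and one has to dispose of the boundary cases ($\bot$ and the initial state) as above; everything else is the routine fact about monotone integer sequences.
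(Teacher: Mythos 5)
Your proof is correct and follows essentially the same route as the paper, which states no separate argument but derives the proposition directly from the two properties of $R^e$ recorded just before it (that the state reached after $v$ satisfies $P=O^e(v)\cap\lsub{\pcset}{|v|}$ and that accepting states mark exactly the steps where $|O^e(w_n)|$ strictly increases), combined with the observation that a non-decreasing integer sequence is bounded iff it has finitely many strict increases. Your explicit treatment of the sink state $\bot$ (which must be read as non-accepting for the statement to hold) is a reasonable clarification of a point the paper leaves implicit.
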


We cannot apply the previous statement to post-critical sequences, because they are left-infinite. Instead, we give the following characterization.

\begin{proposition}\label{prop:unbounded_cycle}
For $p\in\pcset$, the size of the $e$-orbits $O^e(\lsub{p}{n})$ is unbounded if and only if the machine $R^e$ contains a state $(P,\Pi)$ such that $p\in P$ and $(P, \Pa)$ is reachable from a cycle with an accepting state.
\end{proposition}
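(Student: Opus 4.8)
The plan is to analyze the sequence of states that $R^e$ passes through while processing the left-infinite post-critical sequence $p = \ldots x_2 x_1$, from the suffix $\lsub{p}{1}$ outward, and to relate the long-run behavior of this sequence of states to the reachability structure of $R^e$. The key objects are the states $(P_n, \Pi_n)$, where $P_n$ is the part of $\Pi_n$ containing $p$; these are exactly the states through which $R^e$ passes on inputs $\lsub{p}{1}, \lsub{p}{2}, \lsub{p}{3}, \ldots$. Since $R^e$ has finitely many states, the sequence $(P_n, \Pi_n)$ is eventually periodic, so it eventually cycles through a fixed finite set of states forming a directed cycle $\mathcal{C}$ in $R^e$. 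By Proposition~\ref{prop:recogn_e_orbit}, the sizes $|O^e(\lsub{p}{n})|$ are unbounded if and only if $R^e$ passes through infinitely many accepting states while reading $p$, which (by eventual periodicity) happens if and only if $\mathcal{C}$ contains an accepting state.

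The forward direction is then immediate: if the $e$-orbit sizes are unbounded, the cycle $\mathcal{C}$ reached by reading $p$ contains an accepting state, and each state on $\mathcal{C}$ is of the form $(P, \Pi)$ with $p \in P$, so we have exactly the required state. For the converse, suppose $R^e$ contains a state $(P, \Pi)$ with $p \in P$ that is reachable from a cycle containing an accepting state. Here I need to be careful: the hypothesis gives a cycle $\mathcal{C}'$ with an accepting state and a path from $\mathcal{C}'$ to $(P,\Pi)$, but this path need not be the path traced by $p$. The point to exploit is the refinement structure: the partitions $\Pi_n$ form a chain of refinements stabilizing at $\Pi_{n_0}$, so once $R^e$ reaches a state whose partition component is the stable partition $\Pi_{n_0}$, all subsequent partition components equal $\Pi_{n_0}$. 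Thus any cycle in $R^e$ lives entirely among states with partition $\Pi_{n_0}$, and in particular $\mathcal{C}'$ does, and so does the part of the $p$-path from $n_0$ onward. I would argue that the $p$-path, being eventually a cycle $\mathcal{C}$ among states $(P, \Pi_{n_0})$ with $p \in P$, must actually coincide with (or at least meet) the structure reachable from $\mathcal{C}'$: concretely, if $(P, \Pi_{n_0})$ is reachable from the accepting cycle $\mathcal{C}'$ and $p \in P$, then reading $p$ forward from any point where its state is $(P, \Pi_{n_0})$ stays in the stable regime, and one shows the $p$-path revisits accepting states infinitely often. The cleanest way is to observe that since $p$ is a post-critical sequence of a bounded automaton it is eventually periodic, read along a cycle in $A$; this periodicity forces the $R^e$-trajectory on $p$ to be periodic from some point, entering a cycle $\mathcal{C}$ of $R^e$, and I must show $\mathcal{C}$ contains an accepting state precisely under the stated hypothesis.

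The main obstacle is exactly pinning down why the hypothesis "$(P,\Pi)$ with $p \in P$ reachable from an accepting cycle" forces the eventual cycle $\mathcal{C}$ traced by $p$ itself to contain an accepting state, rather than merely guaranteeing the existence of some accepting cycle somewhere in $R^e$ unrelated to $p$. I expect this to come down to two facts working together: first, all states $(Q, \Pi)$ appearing on any cycle of $R^e$ and on the tail of the $p$-trajectory have the same stabilized partition $\Pi_{n_0}$; and second, within that stable partition the transition structure restricted to "the part containing $p$" is essentially deterministic and forced by the eventually-periodic structure of $p$, so that the accepting cycle from which $(P,\Pi)$ is reachable is in fact the cycle $\mathcal{C}$ that $p$ settles into. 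I would also handle the degenerate subtlety that "reachable from a cycle with an accepting state" should be read as: there is a cycle in $R^e$ that contains an accepting state and from which $(P,\Pi)$ can be reached by a directed path (possibly of length zero), so that $(P,\Pi)$ may itself lie on such a cycle.
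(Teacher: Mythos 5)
Your approach founders on a point the paper flags explicitly right before this proposition: Proposition~\ref{prop:recogn_e_orbit} cannot be applied to post-critical sequences because they are \emph{left}-infinite. The words $\lsub{p}{n}$ are suffixes of $p$, so $\lsub{p}{n+1}$ arises from $\lsub{p}{n}$ by \emph{prepending} a letter; each $\lsub{p}{n}$ therefore requires a fresh run of $R^e$ from the initial state, and there is no single run of $R^e$ ``processing $p$''. Consequently the sequence of final states $(P_n,\Pi_n)$ is not a directed path in $R^e$: the transition arriving at $(P_{n+1},\Pi_{n+1})$ is taken on the \emph{last} letter of $p$ and starts from the part of $\Pi_n$ containing the \emph{shift} of $p$, not from $(P_n,\Pi_n)$. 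In fact, since the partitions stabilize, $(P_n,\Pi_n)$ is eventually \emph{constant}, equal to $(P,\Pi_{n_0})$ with $p\in P$ in the stable partition; this state need not lie on any cycle of $R^e$, so there is no ``cycle $\mathcal{C}$ that $p$ settles into''. (Also, a sequence taking finitely many values need not be eventually periodic.) Both halves of your argument rely on this nonexistent run, so neither goes through as written.

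What is actually needed are two separate arguments phrased in terms of how $e$-orbits propagate along the transition structure of $R^e$, and both are missing from your sketch. For the ``if'' direction: if $(P,\Pi_{n_0})$ lies on a cycle of length $l$ reading a word $v$, then $\lsub{p}{n}v$ and $\lsub{p}{n+l}$ lie in the same $e$-orbit for all $n\ge n_0$, so an accepting state on the cycle forces $|O^e(\lsub{p}{n+l})|>|O^e(\lsub{p}{n})|$ and hence unboundedness for $p\in P$; a path reading $u$ from $(P,\Pi_{n_0})$ to $(Q,\Pi_{n_0})$ gives $O^e(\lsub{p}{n})u\subset O^e(\lsub{q}{n+|u|})$, which transports the unboundedness to every $q$ in a part merely \emph{reachable} from such a cycle (your sketch has no mechanism for this transport). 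For the ``only if'' direction one needs a covering argument: if no cycle admitting a path to $(Q,\Pi_{n_0})$ contains an accepting state, then every $w\in O^e(\lsub{q}{n})$ factors as $w=uv$ with $|v|$ bounded by the diameter of $R^e$ and $u\in O^e(\lsub{p}{n-|v|})$ for some $p$ in a part lying on one of those cycles; those $e$-orbits have bounded size because their cycles are non-accepting, so $|O^e(\lsub{q}{n})|$ is bounded by a constant times a bounded power of $|X|$. Your correct observations about the refinement structure and about cycles living in the stable partition are useful, but they do not substitute for these two steps.
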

\begin{proof}
Let $(P,\Pa)$ be a state belonging to a cycle. Then $\Pi=\Pi_{n}$ for all $n\geq n_0$. For $p\in P$, there is a word $v\in X^{l}$, where $l$ is the length of the cycle, such that $(\lsub{p}{n})v$ and $\lsub{p}{n+l}$ belong to the same $e$-orbit for all $n\geq n_0$.

If the cycle contains an accepting state, then $|O^e(\lsub{p}{n})| < |O^e(\lsub{p}{n}v)|=|O^e(\lsub{p}{n+l})|$, and the size of $O^e(\lsub{p}{n})$ is unbounded. Now if $q\in\pcset$ belongs to a part reachable from the part $P$, then there is a word $u\in X^{m}$ such that $(\lsub{p}{n})u$ and $\lsub{q}{n+m}$ are in the same $e$-orbit for all $n\geq n_0$. Therefore, the size of the $e$-orbits $O^e(\lsub{q}{n})$ is unbounded.

In the other direction, if the cycle does not contain an accepting state, then $|O^e(\lsub{p}{n})|=|O^e(\lsub{p}{n}v)|=|O^e(\lsub{p}{n+l})|$ for all $n\geq n_0$, and the size of $O^e(\lsub{p}{n})$ is bounded. Now for $Q\in\Pa$, consider all cycles in $R^e$ that possess a path to $(Q, \Pa)$. If none of them has an accepting state, the size of $O^e(\lsub{p}{n})$ is bounded by a constant $C$ for all $p$ belonging to a part from these cycles. Let $l$ be the largest distance between parts in $\Pa$. Then, for every $w\in O^e(\lsub{q}{n})$, we can write $w=uv$ with $|v|=k\leq l$ such that $u\in O^e(\lsub{p}{n-k})$, where $p\in P$ for some part $P$ from the cycles. Therefore, $O^e(\lsub{q}{n})$ is covered by the sets $O^e(\lsub{p}{n-k})v$ for $v\in X^{k}$, $k\leq l$, and $p$ belongs to a part from the cycles. Hence $|O^e(\lsub{q}{n})|\leq C|X|^{1+2+\ldots+l}$, and the size of $O^e(\lsub{q}{n})$ is bounded.
\end{proof}

\paragraph{The finiteness problem.} The following statement follows from the discussion above and completes the proof of Theorem~\ref{thm_bounded_finite}.

\begin{proposition}\label{prop:finiteIffNoCyle}
Let $A$ be a bounded automaton. The automaton group $G_A$ is finite if and only if the machine $R^e$ has no cycle with an accepting state.
\end{proposition}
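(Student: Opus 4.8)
The plan is to obtain the proposition from a chain of three equivalences: (i) $G_A$ is finite if and only if the sizes of all $e$-orbits are bounded; (ii) the sizes of all $e$-orbits are bounded if and only if, for every $p\in\pcset$, the sizes of the $e$-orbits $O^e(\lsub{p}{n})$ are bounded; and (iii) the latter holds if and only if $R^e$ has no cycle with an accepting state. Equivalence (i) is the remark recorded right after Lemma~\ref{lem:unionOfEOrbits}. For (iii) I would first note that ``$R^e$ has a cycle with an accepting state'' is equivalent to ``$R^e$ contains a state $(P,\Pi)$ with $P\neq\emptyset$ reachable from a cycle with an accepting state'': the states lying on such a cycle are reachable from it and all satisfy $P\in\Pi$, hence $P\neq\emptyset$, because the sink $\bot=(\emptyset,\{\pcset\})$ is not accepting (it is not of the form $(P,\Pi)$ with $P\in\Pi$, as $\emptyset\notin\{\pcset\}$) and its only cycle is its self-loop; the converse direction is trivial. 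With this reformulation, (iii) is precisely Proposition~\ref{prop:unbounded_cycle} (in one direction one picks an arbitrary $p\in P$). Chaining (i)--(iii) yields the proposition and, together with the preceding discussion, completes the proof of Theorem~\ref{thm_bounded_finite}. The only implication that needs work is the non-trivial direction of (ii).

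So assume $|O^e(\lsub{p}{n})|\le C_0$ for all $p\in\pcset$ and all $n$; I want a constant $C$, depending only on $A$, with $|O^e(v)|\le C$ for every $v\in X^{*}$. Given $v\in X^n$, let $j\in\{0,\dots,n\}$ be maximal with the property that $O^e(v_j)$ contains a post-critical word; such a $j$ exists because $O^e(v_0)=\{\varepsilon\}$ and $\varepsilon=\lsub{p}{0}$ for every $p\in\pcset$ (recall $\pcset\neq\emptyset$). Then $O^e(v_j)=O^e(\lsub{p}{j})$ for some $p\in\pcset$, so $|O^e(v_j)|\le C_0$. If $j=n$ we are done; otherwise, by maximality of $j$, the $e$-orbit $O^e(v_{j+1})$ contains no post-critical word, so the stabilization fact ($O^e(wu)=O^e(w)u$ whenever $O^e(w)$ contains no post-critical word), applied to $w=v_{j+1}$, gives $|O^e(v)|=|O^e(v_{j+1})|$. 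It thus remains to bound the one-level step from $v_j$ to $v_{j+1}$.

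This last point is where I expect the main obstacle, and it is also where boundedness of $A$ really enters. By the iterative construction of $e$-orbits, $O^e(v_{j+1})$ is a union of sets $O_i y$ ($y\in X$), where the $O_i$ are the $e$-orbits of level $j$ and one of these sets is $O^e(v_j)y_0$ with $y_0$ the last letter of $v_{j+1}$; if the union consists of this single set only, then $|O^e(v_{j+1})|=|O^e(v_j)|\le C_0$. Otherwise every set $O_i y$ in the union is linked to $O^e(v_j)y_0$ by a chain of $E^e$-merges, and a merge of $O_a y$ with $O_b y'$ only happens if $\lsub{p}{j}\in O_a$ and $\lsub{q}{j}\in O_b$ for some $p,q\in\pcset$; hence each $O_i$ occurring in the union contains a post-critical word, so $O_i=O^e(\lsub{q}{j})$ for some $q\in\pcset$ and $|O_i|\le C_0$. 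Since distinct $e$-orbits are disjoint whereas there are at most $|\pcset|$ post-critical words at level $j$, at most $|\pcset|$ of the $O_i$ contain a post-critical word, so the union runs over at most $|X|\cdot|\pcset|$ pairs $(O_i,y)$, giving $|O^e(v_{j+1})|\le|X|\cdot|\pcset|\cdot C_0=:C$. Hence $|O^e(v)|\le C$ in all cases. The heart of the matter is the observation that every merging of $e$-orbits is ``mediated'' by the finite post-critical set, which caps the one-level growth factor --- and therefore all $e$-orbit sizes --- simultaneously.
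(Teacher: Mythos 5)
Your proof is correct and takes essentially the same route as the paper: the key direction is exactly your chain through Proposition~\ref{prop:unbounded_cycle}, with your equivalence (ii) merely spelling out (with an explicit bound $|X|\cdot|\pcset|\cdot C_0$) the step the paper only asserts as ``then all $e$-orbits have bounded size too''. The single cosmetic difference is the infinite direction, where the paper invokes Proposition~\ref{prop:recogn_e_orbit} (follow the accepting cycle forever) while you reuse Proposition~\ref{prop:unbounded_cycle}; both are immediate from the machinery already in place.
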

\begin{proof}
By Proposition~\ref{prop:recogn_e_orbit}, the existence of such a cycle implies the existence of a sequence $w\in X^{\omega}$ such that the size of the $e$-orbits $O^e(w_n)$ is unbounded. Hence, the group $G_A$ is infinite.

Conversely, if none of the cycles contain an accepting state, then the $e$-orbits containing a post-critical word have bounded size by Proposition~\ref{prop:unbounded_cycle}. Then all $e$-orbits have bounded size too, and the group $G_A$ is finite.
\end{proof}

\paragraph{Orbits of the action $(G,X^{\omega})$.}
We can modify the previous construction and construct a finite-state machine $R$ recognizing those sequences $w\in X^{\omega}$ that have an infinite $G$-orbit. Unlike in the previous subsections, we cannot assume $A=C(A)$ in general, and we consider the case $A\neq C(A)$ separately. However, we still assume $\pcset \neq \emptyset$ as otherwise the group and, thus, all orbits are finite.

First, we handle the case $A=C(A)$. Recall that every orbit is a union of a bounded number of $e$-orbits (see Lemma~\ref{lem:unionOfEOrbits}). More explicitly, the orbit $O(v)$ for $v\in X^{n}$ can be constructed as follows. If $O(v)\cap \lsub{\pcset}{n} =\emptyset$, then $O(v)=O^e(v)$. In the case $O(v) \cap \lsub{\pcset}{n} \neq\emptyset$, we successively combine two $e$-orbits $O^e(\lsub{p}{n})$ and $O^e(\lsub{q}{n})$ for every $p,q\in\pcset$ such that
the pair $\{ p, q \}$ belongs to the set
\[
E=\left\{ \{p,q\} :
\begin{array}{l}
\mbox{ there is a directed left-infinite path in $A$ ending} \\
\mbox{ at a non-trivial state and labeled by $p|q$ with $p \neq q$}
\end{array}
 \right\}.
\]
The resulting set containing $O(v)\cap \lsub{\pcset}{n}$ is exactly $O(v)$.

The machine $R$ handles this iterative process and differs from $R^{e}$ essentially by accepting states. We relabel the states of the machine $R^e$ by the following rule. For each partition $\Pi$ among the states of $R^e$, we successively combine two sets $P,Q\in\Pi$ if there exist $p\in P$ and $q\in Q$ such that $\{p,q\}\in E$.
The resulting sets define a new partition $\Pi'$ of the post-critical set $\pcset$. Then we relabel the state $(P,\Pi)$ with the label $(P',\Pi')$, where $P'\in\Pi'$ is the unique part containing $P$. In this way we may get several distinct states with the same label. In the new machine a state $(P',\Pi')$ is accepting if there exists an accepting state $(P,\Pi)$ of $R^{e}$ such that $P\subset P'$.

It follows from construction that, for every input word $v \in X^{*}$, if $(P,\Pi)$ is the final state of $R$ after processing $v$, then $P=O(v)\cap \lsub{\pcset}{|v|}$. An accepting state indicates that, when we pass through it, at least one $e$-part of the orbit increases.
Then Proposition~\ref{prop:recogn_e_orbit} holds for the standard orbits and we get

\begin{proposition}\label{prop:RforGOrbits}
A sequence $w\in X^{\omega}$ has a finite $G$-orbit if and only if the machine $R$, while processing the sequence $w$, passes only through a finite number of accepting states.
\end{proposition}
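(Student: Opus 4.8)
The plan is to run the argument behind Proposition~\ref{prop:recogn_e_orbit} once more, with $e$-orbits replaced by orbits, exploiting that every orbit is a union of boundedly many $e$-orbits (Lemma~\ref{lem:unionOfEOrbits}). I would begin with a routine reduction: since $G$ preserves lengths and lengths of common prefixes, truncation gives surjections $O(w_n)\twoheadrightarrow O(w_{n-1})$, so $\bigl(|O(w_n)|\bigr)_n$ is non-decreasing, and, writing the elements of $O(w)\subseteq X^{\omega}$ as the infinite branches of the tree whose level-$n$ vertices are the words of $O(w_n)$, a K\"onig-type argument shows $|O(w)|=\sup_n|O(w_n)|$. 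Hence $O(w)$ is finite if and only if $\bigl(|O(w_n)|\bigr)_n$ is bounded, and it remains to link this to the accepting states of $R$.

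Next I would set up the dictionary between $R$ and the growth of $O(w_n)$. By Lemma~\ref{lem:unionOfEOrbits}, $O(w_n)=O^e(w_n)\cup\bigcup\{O^e(\lsub q n):q\in\pcset,\ \lsub q n\in O(w_n)\}$ is a union of at most $|\pcset|+1$ $e$-orbits, so $\bigl(|O(w_n)|\bigr)_n$ is bounded iff each of these $e$-orbit families is bounded. When $R$ processes $w$, the state reached after $w_n$ is the relabel of the $R^e$-state $(O^e(w_n)\cap\lsub\pcset n,\Pi_n)$, namely $(O(w_n)\cap\lsub\pcset n,\Pi_n')$. Unwinding the definition of the accepting states of $R$, and using that the blocks of $\Pi_n$ contained in $O(w_n)\cap\lsub\pcset n$ are exactly the $e$-orbit classes of the post-critical words lying in $O(w_n)$, one gets: $R$ is in an accepting state after reading $w_n$ if and only if $R^e$ ends in an accepting state after reading one of the words $\lsub q n\in O(w_n)$ (the case $\lsub q n\in O^e(w_n)$ is allowed and accounts for the constituent $O^e(w_n)$). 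Applying the analysis preceding Proposition~\ref{prop:recogn_e_orbit} to the word $\lsub q n$ in place of a prefix of $w$, this holds exactly when $|O^e(\lsub q n)|>|O^e(u)|$, where $u$ is the length-$(n-1)$ prefix of $\lsub q n$; note $u=\lsub{(\sigma q)}{n-1}$ for the shift $\sigma q\in\pcset$ of $q$, and $u\in O(w_{n-1})$ because orbits on consecutive levels are linked by prefix-truncation.

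The forward implication is then straightforward. Suppose $R$ visits no accepting state on $w$ from some level $N$ on. Then the forward run of $R^e$ on $w$ also visits no accepting state from level $N$ on (an accepting $R^e$-state there would make its relabel accepting in $R$), so $|O^e(w_n)|=|O^e(w_N)|$ for all $n\ge N$. Moreover, for a post-critical word $\lsub q n\in O(w_n)$ with $n\ge N$, removing its last letters one at a time keeps us inside $O(w_{n-1}),O(w_{n-2}),\dots$ by prefix-truncation, so by the dictionary the $R^e$-state is non-accepting at each step down to level $N$; hence $|O^e(\lsub q n)|=|O^e(\lsub{(\sigma^{\,n-N}q)}{N})|\le|X|^{N}$. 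Consequently $|O(w_n)|\le(|\pcset|+1)\,|X|^{N}$ for all $n$, and $O(w)$ is finite.

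The converse --- if $O(w)$ is finite then $R$ visits only finitely many accepting states --- is the step I expect to be the main obstacle. Arguing by contraposition, assume $R$ is in an accepting state after $w_n$ for infinitely many $n$; by the dictionary and the pigeonhole principle there is a fixed $p\in\pcset$ with $\lsub p n\in O(w_n)$ and $|O^e(\lsub p n)|>|O^e(\lsub{(\sigma p)}{n-1})|$ for infinitely many $n$, and I need to conclude $\sup_n|O^e(\lsub p n)|=\infty$, whence $\sup_n|O(w_n)|=\infty$. The difficulty is that the words $\lsub p n$ grow on the \emph{left} as $n$ increases, so the forward run of $R^e$ does not process them incrementally and the simple monotonicity available for prefixes of $w$ does not apply. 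The plan is to use that $p$ is eventually periodic --- here boundedness of $A$ is essential --- to pass to a single residue class of $n$ modulo the period, writing $\lsub p n=(\text{fixed partial period})\cdot v^{\,k}\cdot(\text{fixed tail})$ with $v$ the period and $k\to\infty$, and then to analyse the $R^e$-run on these words through the cycle of $R^e$ that reading $v^{k}$ eventually enters --- in effect re-running the mechanism from the proof of Proposition~\ref{prop:unbounded_cycle}, but now extracting unboundedness of $|O^e(\lsub p n)|$ from the hypothesis that the final $R^e$-state is accepting for infinitely many $k$. The genuinely delicate point is the presence of a nonempty tail: one must verify that the repeated increase originates in the periodic block and is not merely produced each time by passage through the fixed tail. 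Settling this case is the crux of the proof and is the one ingredient not already present in Proposition~\ref{prop:recogn_e_orbit}.
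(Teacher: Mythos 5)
Your reduction to boundedness of $|O(w_n)|$, your dictionary between accepting states of $R$ and growth of the constituent $e$-orbits of $O(w_n)$, and your forward direction (finitely many accepting states $\Rightarrow$ every constituent $O^e(\lsub{q}{n})$ with $\lsub{q}{n}\in O(w_n)$ keeps the size it had at level $N$, hence $|O(w_n)|\le(|\pcset|+1)|X|^N$) are all sound and in line with what the construction provides. The genuine gap is the converse, which you yourself leave open. Moreover, the reduction you set up there is itself shaky: from infinitely many accepting levels and pigeonhole you get a fixed $p\in\pcset$ with $\lsub{p}{n}\in O(w_n)$ and $|O^e(\lsub{p}{n})|>|O^e(\lsub{(\sigma p)}{n-1})|$ infinitely often, but this does not by itself force $\sup_n|O^e(\lsub{p}{n})|=\infty$: the increase at level $n$ is measured against the \emph{shifted} sequence $\sigma p$ at level $n-1$, not against $p$ at a lower level, so the increments need not accumulate (nothing in the hypothesis excludes, say, $|O^e(\lsub{p}{n})|=2$ while $|O^e(\lsub{(\sigma p)}{n-1})|=1$ at every such $n$). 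So even the target of your planned periodicity/cycle analysis would need justification, and the analysis itself --- with the ``tail'' issue you flag --- is not carried out. As it stands, one implication of the proposition is unproved.

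The converse can be closed without ever running $R^e$ on left-growing words, staying entirely at the level of the counting that the paper's construction encodes. By Lemma~\ref{lem:unionOfEOrbits}, $O(w_n)$ is the disjoint union of at most $|\pcset|+1$ $e$-orbits (its constituents). Each constituent of $O(w_n)$ is a disjoint union of sets $O_0y$ with $O_0$ a level-$(n-1)$ $e$-orbit, and since truncation maps $O(w_n)$ onto $O(w_{n-1})$, every such $O_0$ is a constituent of $O(w_{n-1})$ and every constituent of $O(w_{n-1})$ occurs in at least one such piece. Hence $|O(w_n)|\ge|O(w_{n-1})|$, and if equality holds then each constituent of $O(w_{n-1})$ is used in exactly one piece, so the number of constituents cannot increase and strictly decreases whenever some constituent of $O(w_n)$ consists of at least two pieces --- which is exactly what an accepting state of $R$ at level $n$ certifies (some $P\subset O(w_n)\cap\pcset$ with $(P,\Pi)$ accepting in $R^e$). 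Now if $O(w)$ were finite, the non-decreasing sequence $|O(w_n)|$ stabilizes, and after stabilization each accepting level strictly decreases the number of constituents, which lies between $1$ and $|\pcset|+1$; so only finitely many accepting states are passed. Note the moral here, which explains why your attempt to transplant Proposition~\ref{prop:recogn_e_orbit} verbatim gets stuck: an accepting state of $R$ need not make $|O(w_n)|$ itself grow (a growing $e$-part may merely swallow another $e$-part of the same orbit), so the correct bookkeeping is the pair (total size, number of $e$-parts), not the size of one fixed family $O^e(\lsub{p}{n})$.
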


An automaton group is level-transitive if for every level there is only one orbit (i.e., if $|O(v)| = |X|^{|v|}$ for all $v \in X^*$). This property can be determined from $R$ by construction.

\begin{proposition}
  The group $G_A$ (with $A = C(A)$) is level-transitive if and only if the state $\bot$ cannot be reached and all states in $R$ are labeled by $(\pcset, \{ \pcset \})$.
\end{proposition}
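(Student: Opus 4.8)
The plan is to deduce the statement entirely from the description of $R$ established just above: for every $v\in X^{*}$, if $(P,\Pa)$ is the state of $R$ reached after reading $v$, then $P=O(v)\cap\lsub{\pcset}{|v|}$, with $P=\emptyset$ (the state $\bot$) exactly when $O(v)$ contains no post-critical word. Consider the condition $(\ast)$: for every $v\in X^{*}$ one has $O(v)\cap\lsub{\pcset}{|v|}=\lsub{\pcset}{|v|}$, i.e.\ the orbit of every finite word contains \emph{all} post-critical words of its level. The first step is to observe that, under the identification of post-critical words of level $n$ with elements of $\pcset$, the hypothesis of the proposition is just a reformulation of $(\ast)$: a state of $R$ is reachable precisely when it is the endpoint of the run on some $v$, so $\bot$ is unreachable iff no $O(v)$ misses all post-critical words; and since the first component $P$ of any state is one of the blocks of the partition $\Pa$ in its second component, $P=\pcset$ forces $\Pa=\{\pcset\}$, so the reached state is labeled $(\pcset,\{\pcset\})$ exactly when $O(v)\cap\lsub{\pcset}{|v|}=\lsub{\pcset}{|v|}$. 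To read $(\ast)$ off the finite graph $R$ rather than off its runs I would use that every state of $R$ other than $\bot$ is reachable; this follows from the construction (and the analogous fact for $R^{e}$), since from $(\pcset,\Pa_{0})$ every block of $\Pa_{1}$ is hit in one step, and inductively every block of every constructed partition $\Pa_{i}$ is hit from a block of $\Pa_{i-1}$.

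Next I would show that $(\ast)$ is equivalent to level-transitivity. Suppose $(\ast)$ holds. Since $\pcset\neq\emptyset$ (the standing assumption made at the start of this section; otherwise the group is trivial, hence not level-transitive as $|X|\geq 2$), fix some $p\in\pcset$. Then for every $n$ and every $v\in X^{n}$ the post-critical word $\lsub{p}{n}$ lies in $O(v)$, so all of $X^{n}$ coincides with the single orbit of $\lsub{p}{n}$; thus $O(v)=X^{n}$ and $|O(v)|=|X|^{n}$ for all $v$, which is exactly level-transitivity. Conversely, if $G_{A}$ is level-transitive then $O(v)=X^{n}\supseteq\lsub{\pcset}{n}$ for every $v\in X^{n}$, so $O(v)\cap\lsub{\pcset}{n}=\lsub{\pcset}{n}$ and $(\ast)$ holds. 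Combining this with the first step (hypothesis $\iff(\ast)\iff$ level-transitivity) proves the proposition.

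The main obstacle I anticipate is bookkeeping rather than mathematics. One has to keep the identification between post-critical words $\lsub{p}{n}$ and elements of $\pcset$ explicit enough that the assertion that the first component of the state reached on $v$ is all of $\pcset$ is literally the assertion that $O(v)$ contains every post-critical word of level $n$; and one has to check, from the explicit construction of $R$ (and of $R^{e}$), that every state except $\bot$ is reachable, so that the hypothesis of the proposition is genuinely a syntactic, decidable property of the finite acceptor $R$. Both points are routine once the construction of the preceding paragraphs is unwound, and no further ideas about the group action are needed beyond Lemma~\ref{lem:unionOfEOrbits} and the already-proved correctness of $R$.
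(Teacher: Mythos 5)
Your proof is correct and supplies exactly the argument the paper leaves implicit (the paper gives no proof, asserting only that the property ``can be determined from $R$ by construction''): you reduce the statement to the stated correctness property $P = O(v)\cap\lsub{\pcset}{|v|}$ of $R$, verify that every non-$\bot$ state is reachable so the syntactic condition on $R$ matches the semantic condition on all $v$, and observe that a single post-critical word common to all orbits of a level forces one orbit per level. No further ideas are needed, and your handling of the case $\pcset\neq\emptyset$ matches the paper's standing assumption.
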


\paragraph{The case $A \neq C(A)$.}
Every $e$-orbit of the $(n+1)$-st level is a union of $e$-orbits of the $n$-th level. In order to trace which $e$-orbits should be merged, we have considered words $v$ such that $s(vu)=v'u'$ for some $u\neq u'$. In the case $A=C(A)$ such words $v$ are parameterized by the post-critical sequences; that is why we have considered partitions of the post-critical set by $e$-orbits. In the case $A \neq C(A)$, it is not sufficient to consider only the post-critical sequences; however, there is a similar finite parametrization (closed under removing the last letter). We need to keep track of words of the form $u v^i w$ where $v$ is the input or output label of a cycle in $A$ and $u$ and $w$ correspond to the part before and after the cycle on some directed path. Note that the lengths of $u$ and $w$ are uniformly bounded. Additionally, we also need to keep track of the prefixes and the labels of paths that enter $e$ without a cycle (whose length is again uniformly bounded). To parameterize this, we can take a finite set $\pcset'$ of languages that are either singleton sets $\{ u \}$ (for the short words) or of the form $u v^* w$. Then we just repeat the construction from the previous subsections but make some adaptions. We cannot consider partitions anymore; instead we use sets $\Sigma$ of disjoint subsets of $\pcset'$. An $S \in \Sigma$ should represent an $e$-orbit on some level and contain those languages in $\pcset'$ that have a nonempty intersection with this $e$-orbit. By adapting the merging strategy for $R^e$ and the relabeling for $R$ accordingly, we obtain a finite-state machine $R$ such that, for every input word $v \in X^{*}$, if $(S, \Sigma)$ is the final state of $R$ after processing $v$, then $S$ contains the languages from $\pcset'$ that have a nonempty intersection with $O(v)$.

With the proper modifications, we get Proposition~\ref{prop:finiteIffNoCyle} and Proposition~\ref{prop:RforGOrbits} in the case $A \neq C(A)$. The condition for level-transitivity is a bit different, however. The group acts level-transitively if and only if the fail state $\bot$ cannot be reached in $R$ and every state is labeled by $(S, \{ S \})$ for some $S \subset \pcset'$.

\paragraph{A few open problems.} The following problems remain open.

\begin{question}
Is the finiteness problem  decidable for finitely generated subgroups of the group of bounded automata?
\end{question}
\begin{question}
Is the torsion problem decidable for groups generated by bounded automata?
\end{question}

\clearpage
\section{Example}

In this section, we apply the construction outlined above to the example automaton with seven states over the alphabet $X=\{a,b,c,d\}$ shown in Fig.~\ref{fig:exampleAutomaton}.

\paragraph{Post-critical set.}
  The automaton admits the post-critical sequences:
  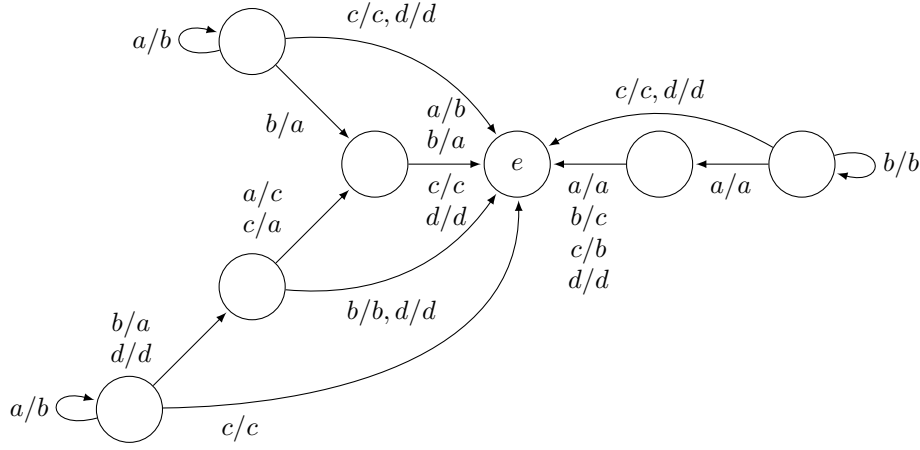
\begin{figure}\centering
    \begin{tikzpicture}[auto, shorten >=1pt, >=latex]
      \node[state] (e) {$e$};
      \node[state, left=of e] (1) {};
      \node[state, below left=of 1] (2) {};
      \node[state, below left=of 2] (3) {};
      \node[state, above left=of 1] (4) {};
      \node[state, right=of e] (5) {};
      \node[state, right=of 5] (6) {};

      \draw[->] (1) edge node[above, align=center] {$a/b$\\$b/a$} node[below, align=center] {$c/c$\\$d/d$} (e)
                (2) edge node[align=center, pos=0.2] {$a/c$\\$c/a$} (1)
                    edge[bend right] node[swap, pos=0.2] {$b/b, d/d$} (e)
                (3) edge[loop left] node {$a/b$} (3)
                    edge node[align=center, pos=0.1] {$b/a$\\$d/d$} (2)
                    edge[bend right, in=-120] node[swap, pos=0.1] {$c/c$} (e)
                (4) edge[loop left] node {$a/b$} (4)
                    edge node[swap] {$b/a$} (1)
                    edge[bend left] node[pos=0.2] {$c/c, d/d$} (e)
                (5) edge node[align=center] {$a/a$\\$b/c$\\$c/b$\\$d/d$} (e)
                (6) edge[loop right] node {$b/b$} (6)
                    edge node {$a/a$} (5)
                    edge[bend right] node[swap] {$c/c, d/d$} (e)
      ;
    \end{tikzpicture}
    \caption{Automaton used as an example. Self-loops at $e$ are not drawn.}
    \label{fig:exampleAutomaton}
  \end{figure}
  \begin{multicols}{4}
    \begin{enumerate}[itemsep=0pt]
      \item $a^{-\omega} b a$
      \item $a^{-\omega} b c$
      \item $a^{-\omega} d a$
      \item $a^{-\omega} d c$

      \columnbreak

      \item $a^{-\omega} b$
      \item $a^{-\omega} d$
      \item $a^{-\omega}$
      \vspace*{\fill}

      \columnbreak

      \item $b^{-\omega} a c$
      \item $b^{-\omega} a a$
      \item $b^{-\omega} d c$
      \item $b^{-\omega} d a$

      \columnbreak

      \item $b^{-\omega} a$
      \item $b^{-\omega} d$
      \item $b^{-\omega}$
      \vspace*{\fill}
    \end{enumerate}
  \end{multicols}
  \noindent{}We will use these numbers to refer to the post-critical sequences from now on.

\paragraph{Iterative construction of $e$-orbits.}
  For the construction of the partitions, we need the set $E^e$ arising from the left-infinite paths in the automaton. For our example, we have
  \allowdisplaybreaks\begin{align*}
    E^e = \{~%
    &\{ (1, a), (8, b) \}, &~& \{ (2, a), (9, b) \}, &~& \{ (5, a), (12, b) \}, &~& \{ (12, a), (12, a) \},\\
    &\{ (1, b), (8, a) \}, &~& \{ (2, b), (9, a) \}, &~& \{ (5, b), (12, a) \}, &~& \{ (12, b), (12, c) \},\\
    &  &~&  &~& \{ (5, b), (12, b) \}, &~& \\
    &\{ (1, c), (8, c) \}, &~& \{ (2, c), (9, c) \}, &~& \{ (5, c), (12, c) \}, &~& \{ (12, c), (12, b) \},\\
    &\{ (1, d), (8, d) \}, &~& \{ (2, d), (9, d) \}, &~& \{ (5, d), (12, d) \}, &~& \{ (12, d), (12, d) \},\\
    &\{ (6, b), (13, b) \}, &~& \{ (7, c), (14, c) \}, &~& \{ (14, c), (14, c) \}, &~& \\
    &\{ (6, d), (13, d) \}, &~& \{ (7, d), (14, d) \}, &~& \{ (14, d), (14, d) \}~\} \text{.} &~&
  \end{align*}

\paragraph{The partitions of $\pcset$ induced by the $e$-orbits.}
  Let $\Pa_n$ denote the partition of the post-critical sequences on the $n$-th level. We only describe one step in detail (going from $\Pa_1$ to $\Pa_2$) and only state the partitions for the other steps. We have $\Pa_1 = \{ P, Q \}$ where $Q$ contains the post-critical sequences 6 and 13 and $P$ contains all other words.

  Going to the next partition, we merge $Pa$ with $Pb$ because of $\{ (1, a), (8, b) \} \in E^e$ and $Pb$ with $Pc$ because of $\{ (12, b), (12, c) \} \in E^e$ but we do not merge any other sets.

  The post-critical sequences 3 and 11 belong to $Qa$, 4 and 10 to $Qc$ and 6 and 13 to $Pd$. All other post-critical sequences belong to $Pa \cup Pb \cup Pc$. This yields the step from $\Pi_1$ to $\Pi_2$ depicted in Fig.~\ref{fig:Pi1ToPi2}.
  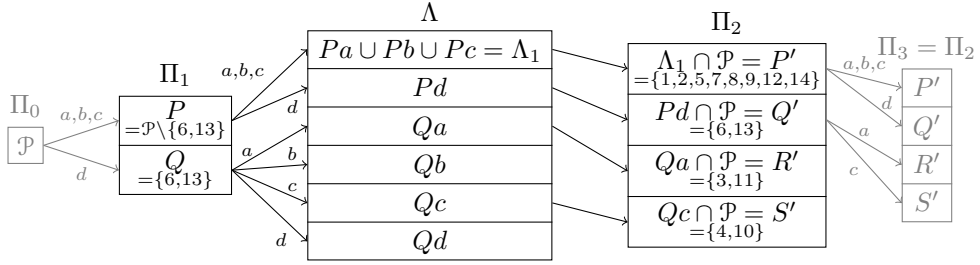
\begin{figure}\centering
    \begin{tikzpicture}[auto]
      \node[draw, gray] (pi0) {$\pcset$};
      \node[above=0cm of pi0, gray] {$\Pa_0$};

      \matrix[matrix of nodes, draw, right=of pi0, inner sep=0pt, nodes={align=center, inner sep=2pt}] (pi1) {
        \node[align=center] {$P$ \\[-0.5\baselineskip] $\scriptstyle = \pcset \setminus \{ 6, 13 \}$}; \\
        \node[align=center] {$Q$ \\[-0.5\baselineskip] $\scriptstyle = \{ 6, 13 \}$}; \\
      };
      \draw ($(pi1.north west)!1/2!(pi1.south west)$) -- ($(pi1.north east)!1/2!(pi1.south east)$);
      \node[above=0cm of pi1] {$\Pa_1$};

      \path[->, gray] (pi0.east) edge node[above] {$\scriptstyle a, b, c$} ($(pi1.north west)!1/4!(pi1.south west)$)
                                 edge node[below] {$\scriptstyle d$} ($(pi1.north west)!3/4!(pi1.south west)$)
      ;

      \matrix[matrix of math nodes, text height=1.5ex, text depth=0ex, draw, right=of pi1, inner sep=0pt, nodes={inner sep=4pt}] (Lambda) {
        Pa \cup Pb \cup Pc = \Lambda_1 \\
        Pd \\
        Qa \\
        Qb \\
        Qc \\
        Qd \\
      };
      \draw ($(Lambda.north west)!1/6!(Lambda.south west)$) -- ($(Lambda.north east)!1/6!(Lambda.south east)$);
      \draw ($(Lambda.north west)!2/6!(Lambda.south west)$) -- ($(Lambda.north east)!2/6!(Lambda.south east)$);
      \draw ($(Lambda.north west)!3/6!(Lambda.south west)$) -- ($(Lambda.north east)!3/6!(Lambda.south east)$);
      \draw ($(Lambda.north west)!4/6!(Lambda.south west)$) -- ($(Lambda.north east)!4/6!(Lambda.south east)$);
      \draw ($(Lambda.north west)!5/6!(Lambda.south west)$) -- ($(Lambda.north east)!5/6!(Lambda.south east)$);
      \node[above=0cm of Lambda] {$\Lambda$};

      \path[->] ($(pi1.north east)!1/4!(pi1.south east)$) edge node[inner sep=2pt] {$\scriptstyle a, b, c$} ($(Lambda.north west)!1/12!(Lambda.south west)$)
                edge node[below, pos=0.8, inner sep=2pt] {$\scriptstyle d$} ($(Lambda.north west)!3/12!(Lambda.south west)$)
                ($(pi1.north east)!3/4!(pi1.south east)$) edge node[above, pos=0.2, inner sep=2pt] {$\scriptstyle a$} ($(Lambda.north west)!5/12!(Lambda.south west)$)
                edge node[above, pos=0.8, inner sep=2pt] {$\scriptstyle b$} ($(Lambda.north west)!7/12!(Lambda.south west)$)
                edge node[above, pos=0.8, inner sep=2pt] {$\scriptstyle c$} ($(Lambda.north west)!9/12!(Lambda.south west)$)
                edge node[below left, pos=0.8, inner sep=2pt] {$\scriptstyle d$} ($(Lambda.north west)!11/12!(Lambda.south west)$)
      ;

      \matrix[matrix of nodes, draw, right=of Lambda, inner sep=0pt, nodes={align=center, inner sep=2pt}] (pi2) {
          \node[align=center] {$\Lambda_1 \cap \pcset = P'$ \\[-0.5\baselineskip] $\scriptstyle = \{ 1, 2, 5, 7, 8, 9, 12, 14 \}$}; \\
          \node[align=center] {$Pd \cap \pcset = Q'$ \\[-0.5\baselineskip] ${\scriptstyle = \{ 6, 13 \}}$}; \\
          \node[align=center] {$Qa \cap \pcset = R'$ \\[-0.5\baselineskip] ${\scriptstyle = \{ 3, 11 \}}$}; \\
          \node[align=center] {$Qc \cap \pcset = S'$ \\[-0.5\baselineskip] ${\scriptstyle = \{ 4, 10 \}}$}; \\
      };
      \node[above=0cm of pi2] {$\Pa_2$};
      \draw ($(pi2.north west)!1/4!(pi2.south west)$) -- ($(pi2.north east)!1/4!(pi2.south east)$);
      \draw ($(pi2.north west)!2/4!(pi2.south west)$) -- ($(pi2.north east)!2/4!(pi2.south east)$);
      \draw ($(pi2.north west)!3/4!(pi2.south west)$) -- ($(pi2.north east)!3/4!(pi2.south east)$);
      \path[->] ($(Lambda.north east)!1/12!(Lambda.south east)$) edge ($(pi2.north west)!1/8!(pi2.south west)$)
                ($(Lambda.north east)!3/12!(Lambda.south east)$) edge ($(pi2.north west)!3/8!(pi2.south west)$)
                ($(Lambda.north east)!5/12!(Lambda.south east)$) edge ($(pi2.north west)!5/8!(pi2.south west)$)
                ($(Lambda.north east)!9/12!(Lambda.south east)$) edge ($(pi2.north west)!7/8!(pi2.south west)$)
      ;

      \matrix[matrix of math nodes, gray, text height=1.5ex, text depth=0ex, inner sep=0pt, draw, nodes={inner sep=4pt}, right=of pi2] (pi3) {
        P' \\
        Q' \\
        R' \\
        S' \\
      };
      \node[above=0cm of pi3, gray] {$\Pa_3 = \Pa_2$};
      \draw[gray] ($(pi3.north west)!1/4!(pi3.south west)$) -- ($(pi3.north east)!1/4!(pi3.south east)$);
      \draw[gray] ($(pi3.north west)!2/4!(pi3.south west)$) -- ($(pi3.north east)!2/4!(pi3.south east)$);
      \draw[gray] ($(pi3.north west)!3/4!(pi3.south west)$) -- ($(pi3.north east)!3/4!(pi3.south east)$);

      \path[->, gray] ($(pi2.north east)!1/8!(pi2.south east)$) edge node[inner sep=2pt, above] {$\scriptstyle a, b, c$} ($(pi3.north west)!1/8!(pi3.south west)$)
                      edge node[inner sep=2pt, above, pos=0.8] {$\scriptstyle d$} ($(pi3.north west)!3/8!(pi3.south west)$)
                      ($(pi2.north east)!3/8!(pi2.south east)$) edge node[inner sep=2pt, above] {$\scriptstyle a$} ($(pi3.north west)!5/8!(pi3.south west)$)
                      edge node[inner sep=2pt, below left] {$\scriptstyle c$} ($(pi3.north west)!7/8!(pi3.south west)$)
      ;
    \end{tikzpicture}
    \caption{Construction of $\Pi_2$ from $\Pi_1$.}\label{fig:Pi1ToPi2}
  \end{figure}

\paragraph{Bounded $e$-orbits.}
  For $R^e$, we obtain the machine depicted in Fig.~\ref{fig:exampleRe}. The states $(P, \Pi_1)$ and $(P', \Pi_2)$ are accepting because they have in-going edges labeled with multiple different letters. Since the accepting state $(P', \Pi_2)$ is part of a cycle, the group generated by the automaton is infinite by Proposition~\ref{prop:finiteIffNoCyle}.
  \begin{figure}\centering
    \begin{tikzpicture}[auto, shorten >=1pt, >=latex, initial text=]
      \node[state, ellipse, initial] (0) {$\pcset, \{ \pcset \}$};
      \node[state, ellipse, above right=of 0, accepting] (1) {$P, \Pa_1$};
      \node[state, ellipse, below right=of 0] (2) {$Q, \Pa_1$};
      \node[state, ellipse, accepting, above right=0cm and 1.5cm of 1] (3) {$P', \Pa_2 \textcolor{gray}{'}$};
      \node[state, ellipse, below right=0.5cm and 1.5cm of 1] (4) {$Q', \Pa_2 \textcolor{gray}{'}$};
      \node[state, ellipse, below=0.5cm of 4] (5) {$R', \Pa_2$};
      \node[state, ellipse, below=0.5cm of 5] (6) {$S', \Pa_2$};

      \path[->] (0) edge node {$a, b, c$} (1)
                    edge node {$d$} (2)
                (1) edge node {$a, b, c$} (3)
                    edge node {$d$} (4)
                (2) edge node {$a$} (5)
                    edge node[swap] {$b$} (6)
                (3) edge[loop right] node {$a, b, c$} (3)
                    edge node {$d$} (4)
                (4) edge node {$a$} (5)
                    edge[bend left, out=80, in=100] node {$c$} (6)
      ;

      \node[above left=0cm of 5, gray] (5') {$R'', \Pa_2'$};
      \draw[gray] (5.south west) -- (5.north east);
      \node[below right=-0.25cm and 0.25cm of 6, gray] (6') {$R'', \Pa_2'$};
      \draw[gray] (6.south west) -- (6.north east);
    \end{tikzpicture}
    \caption{The machine $R^e$ (black part) and the machine $R$ (gray part) for the example automaton. All missing edges go to the state $\bot = (\emptyset, \{ \pcset \})$, which is not drawn.}\label{fig:exampleRe}
  \end{figure}
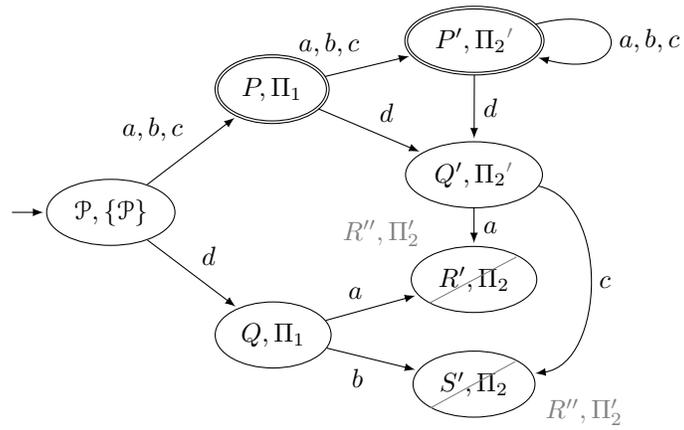~

\paragraph{Orbits of the action $(G,X^{\omega})$.}
  We have
  \[
    E = \{~\{1, 8\}, \{2, 9\}, \{3, 10\}, \{4, 11\}, \{5, 12\}, \{6, 13\}, \{7, 14\}~\} \text{.}
  \]
  In order to construct $R$ from $R^e$, we only have to modify $\Pi_2$ where we merge $R'$ and $S'$ into $R'' = \{ 3, 4, 10, 11 \}$ because we have $3 \in R'$, $10 \in S'$ and $\{3, 10\} \in E$. The pair $\{4, 11\}$ also requires us to merge $R'$ and $S'$ and all other pairs in $E$ already belong to the same set. In the labels, we replace all $\Pi_2$ by $\Pi_2'$ and $R'$ and $S'$ by $R''$. Thus, we now have two states labeled by $(R'', \Pi_2')$, which are not accepting because neither $(R', \Pi_2)$ nor $(S', \Pi_2)$ was. All other previously accepting states remain accepting.

  We see that the automaton does not act level-transitively. If we restrict the automaton to the alphabet $\{ a, b, c \}$, however, then the result does act level-transitively.

\clearpage

\end{document}